\title[Notes on geodesics of cotangent bundle]{Note on geodesics of cotangent bundle with Berger-type deformed Sasaki metric over standard K\"{a}hler manifold}
\author{Abderrahim Zagane}
\abstract{%
    In this paper, first, we introduce the Berger-type deformed Sasaki metric on the cotangent bundle $T^{\ast}M$ over a standard K\"{a}hler manifold $(M^{2m}, J, g)$ and investigate the Levi-Civita connection of this metric. Secondly, we present the unit cotangent bundle equipped with Berger-type deformed Sasaki metric, and we investigate the Levi-Civita connection. Finally, we study the geodesics on the cotangent bundle and the unit cotangent bundle concerning the Berger-type deformed Sasaki metric.
    }
\keywords{%
    Standard K\"{a}hler manifold, cotangent bundle, Berger-type deformed Sasaki metric, geodesics.
    }
\begin{document}

\section{Introduction}

One can define natural Riemannian metrics on the cotangent bundle of a Riemannian manifold. Their construction makes use of the Levi-Civita connection. Among them, the so-called Sasaki metric is of particular interest. That is why A.A. Salimov and F. Agca \ have studied the geometry of a cotangent bundle equipped with the Sasaki metric \cite{S.A1, S.A2}, A.A. Salimov and  F. Ocak \cite{S.O}. The rigidity of Sasaki metric has incited some researchers to construct and study other metrics on the cotangent bundle. This is the reason why some authors have attempted to search for different metrics on the cotangent bundle, which are different deformations of the Sasaki metric. In this direction, some authors defined and studied some metrics, which are called Cheeger-Gromoll metric \cite{A.S} or $g$-Natural Metrics \cite{Agc,Z.Z1} or new metric in the cotangent bundle \cite{O.K,Oca} or a new class of metrics on the cotangent bundle \cite{Zag1,Z.Z2}. In another direction, A. Zagane has introduced the notion of Berger-type deformed Sasaki metric on the cotangent bundle over anti-paraK\"{a}hler manifold \cite{Zag4,Zag5,Zag9}. For deformations of the Sasaki metric or Cheeger-Gromoll metric, we also refer to \cite{G.A, Zag6, Zag12, Z.G}.

The main idea in this paper, firstly, we introduce the Berger-type deformed Sasaki metric on the cotangent bundle $T^{\ast}M$ over a standard K\"{a}hler manifolds manifold $(M^{2m}, J, g)$ and we investigate the formulas relating to its Levi-Civita connection (Theorem \ref{th_01}). Secondly, we present the unit cotangent bundle equipped with the Berger-type deformed Sasaki metric, and we establish the formulas relating to the Levi-Civita connection of this metric (Theorem \ref{th_02}). In the last section, we study the geodesics on the cotangent bundle (Theorem \ref{th_03}, Corollary \ref{co_01}, Corollary \ref{co_02} and Theorem \ref{th_04}) and on unit cotangent bundle (Theorem \ref{th_05}, Theorem \ref{th_06}, Theorem \ref{th_07} and Theorem \ref{th_08}).

\section{Preliminaries}

Let $(M^{m},g)$ be an $m$-dimensional Riemannian manifold, $T^{\ast}M$ be its cotangent bundle and $\pi:T^{\ast}M\rightarrow M$ the natural projection. A local chart $(U,x^{i})_{i=\overline{1,m}}$ on $M$ induces a local chart $(\pi^{-1}(U),x^{i},x^{\bar{i}}=p_{i})_{\bar{i}=\overline{m+1,2m}}$ on $T^{\ast}M$, where $p_{i}$ is the component of covector $p$ in each cotangent space $T_{x}^{\ast}M$, $x\in U$ with respect to the natural coframe $\{dx^{i}\}$, denote  by $\partial_{i}=\frac{\partial}{\partial x^{i}}$ and $\partial_{\bar{i}}=\frac{\partial}{\partial x^{\bar{i}}}$. Let $C^{\infty}(M)$ (resp. $C^{\infty}(T^{\ast}M)$) be the ring of real-valued $C^{\infty}$ functions on $M$(resp. $T^{\ast}M$) and $\Im^{r}_{s}(M) $ (resp. $\Im^{r}_{s}(T^{\ast}M)$) be the module over $C^{\infty}(M)$ (resp. $C^{\infty}(T^{\ast}M)$) of $C^{\infty}$ tensor fields of type $(r, s)$. Denote by $\Gamma_{ij}^{k}$ the Christoffel symbols of $g$ and by $\nabla$ the Levi-Civita connection of $g$.

The Levi Civita connection $\nabla$ defines a direct sum decomposition
\begin{eqnarray}\label{D.S}
TT^{\ast}M=VT^{\ast}M\oplus HT^{\ast}M
\end{eqnarray}
of the tangent bundle to $T^{\ast}M$ at any $(x,p)\in T^{\ast}M$ into vertical subspace
\begin{eqnarray}\label{V.D}
V_{(x,p)}T^{\ast}M=\ker(d\pi_{(x,p)})=\{\omega_{i}\partial_{\bar{i}}|_{(x,p)},\, \omega_{i}\in\mathbb{R}\},
\end{eqnarray}
and the horizontal subspace
\begin{eqnarray}\label{H.D}
H_{(x,p)}T^{\ast}M=\{X^{i}\partial_{i}|_{(x,p)}+X^{i}p_{a}\Gamma_{hi}^{a}\partial_{\bar{h}}|_{(x,p)},\, X^{i}\in \mathbb{R}\}.
\end{eqnarray}

Note that the map $X \rightarrow{}^{H}\!X=X^{i}\partial_{i}|_{(x,p)}+X^{i}p_{a}\Gamma_{hi}^{a}\partial_{\bar{h}}|_{(x,p)}$ is an isomorphism between the vector spaces $T_{x}M$ and $H_{(x,p)}T^{\ast}M$. 

Similarly, the map $\omega \rightarrow {}^{V}\!\omega=\omega_{i}\partial_{\bar{i}}|_{(x,p)}$ is an isomorphism between the vector spaces $T_{x}^{\ast}M$ and $V_{(x,p)}T^{\ast}M$. Obviously, each tangent vector $Z \in T_{(x,p)}T^{\ast}M$ can be written in the form $ Z = {}^{H}\!X + {}^{V}\!\omega$,
where $X \in T_{x}M$ and $\omega \in T_{x}^{\ast}M$ are uniquely determined.

Let $X=X^{i}\partial_{i}$ and $\omega=\omega_{i} dx^{i}$ be local expressions in  $(U,x^{i})_{i=\overline{1,m}}$, of a vector and covector ($1$-form) field $X\in\Im^{1}_{0}(M)$ and $\omega\in\Im^{0}_{1}(M)$, respectively. Then the horizontal lift ${}^{H}\!X \in \Im^{1}_{0}(T^{\ast}M)$ of $X\in\Im^{1}_{0}(M)$ and the vertical lift ${}^{V}\!\omega\in \Im^{1}_{0}(T^{\ast}M)$ of  $\omega\in\Im^{0}_{1}(M)$ are defined, respectively by
\begin{eqnarray}\label{Hori}
{}^{H}\!X&=&X^{i}\partial_{i}+ p_{h}\Gamma_{ij}^{h}X^{j}\partial_{\bar{i}},\\\label{Vert}
{}^{V}\!\omega&=& \omega_{i}\partial_{\bar{i}},
\end{eqnarray}
with respect to the natural frame $\{\partial_{i}, \partial_{\bar{i}}\}$, (see \cite{Y.I} for more details).

From $(\ref{Hori})$ and $(\ref{Vert})$ we see that ${}^{H}\!(\partial_{i})$ and $ {}^{V}\!(dx^{i})$ have respectively local expressions of the form 
\begin{eqnarray}\label{Hori2}
{}^{H}\!(\partial_{i})&=&\partial_{i}+ p_{a}\Gamma_{hi}^{a}\partial_{\bar{h}},\\\label{Vert2}
{}^{V}\!(dx^{i})&=&\partial_{\bar{i}}.
\end{eqnarray}
The set of vector fields $\{{}^{H}\!(\partial_{i})\}$ on $\pi^{-1}(U)$ define a local frame for $HT^{\ast}M$ over $\pi^{-1}(U)$ and the set of vector fields $\{{}^{V}\!(dx^{i})\}$ on $\pi^{-1}(U)$ define a local frame for $VT^{\ast}M$ over $\pi^{-1}(U)$. The set $\{{}^{H}\!(\partial_{i}),{}^{V}\!(dx^{i})\}$  define a local frame on $T^{\ast}M$, adapted to the direct sum decomposition~$(\ref{D.S})$.

In particular, we have the vertical spray ${}^{V}\!p$ on $T^{\ast}M$ defined by
\begin{eqnarray}\label{Liouville vector}
{}^{V}\!p= p_{i}{}^{V}\!(dx^{i}) =p_{i}\partial_{\bar{i}},
\end{eqnarray}
${}^{V}\!p$ is also called the canonical or Liouville vector field on $T^{\ast}M$.

\begin{lemma}[\cite{Y.I}] \label{lem_00}
Let $(M^{m}, g)$ be a Riemannian manifold, $\nabla$ be the Levi-Civita connection, and $R$ be the Riemannian curvature tensor. Then the Lie bracket of the cotangent bundle $T^{\ast}M$ of $M$ satisfies the following
\begin{enumerate}
\item[(1)] $[{}^{V}\!\omega,{}^{V}\!\theta]=0,$
\item[(2)] $[{}^{H}\!X,{}^{V}\!\theta]={}^{V}\!(\nabla_{X}\theta),$
\item[(3)] $[{}^{H}\!X,{}^{H}\!Y]={}^{H}\![X,Y]+{}^{V}\!(pR(X,Y)),$
\end{enumerate}
for all $X,Y\in\Im^{1}_{0}(M)$ and $\omega,\theta\in\Im^{0}_{1}(M)$, such that $pR(X,Y)=p_{a}R^{a}_{ijk}X^{i}Y^{j}\,dx^{k}$, where $R^{a}_{ijk}$ are local components of $R$ on $(M^{m},g)$.
\end{lemma}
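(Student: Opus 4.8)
The plan is to verify all three identities by a direct computation in the adapted natural frame $\{\partial_i,\partial_{\bar i}\}$, starting from the explicit coordinate expressions $(\ref{Hori})$ and $(\ref{Vert})$ for the lifts. The single fact that organises every calculation is that the component functions $X^i,Y^i,\omega_i,\theta_i$ and the Christoffel symbols $\Gamma^h_{ij}$ are (pullbacks of) functions on the base $M$, so they depend on the $x$-coordinates alone; hence $\partial_{\bar k}$ annihilates each of them, whereas $\partial_{\bar k}(p_h)=\delta_{kh}$. I will use this observation repeatedly, together with the symmetry $\Gamma^h_{ij}=\Gamma^h_{ji}$ of the Levi-Civita connection.

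Identity $(1)$ is immediate: writing ${}^{V}\!\omega=\omega_i\partial_{\bar i}$ and ${}^{V}\!\theta=\theta_j\partial_{\bar j}$, every term of the bracket $[{}^{V}\!\omega,{}^{V}\!\theta]$ carries a factor $\partial_{\bar k}(\omega_i)$ or $\partial_{\bar k}(\theta_j)$, all of which vanish. For identity $(2)$ I would expand $[{}^{H}\!X,{}^{V}\!\theta]$ from ${}^{H}\!X=X^i\partial_i+p_h\Gamma^h_{ij}X^j\partial_{\bar i}$ and ${}^{V}\!\theta=\theta_k\partial_{\bar k}$. Once the $\partial_{\bar{}}$-derivatives of the base functions are discarded, only two terms remain, namely $X^i(\partial_i\theta_k)\partial_{\bar k}$ and $-\theta_k\,\partial_{\bar k}(p_h\Gamma^h_{ij}X^j)\partial_{\bar i}=-\theta_k\Gamma^k_{ij}X^j\partial_{\bar i}$. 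After relabelling the dummy indices and using $\Gamma^k_{ij}=\Gamma^k_{ji}$, the sum becomes $X^j(\partial_j\theta_i-\Gamma^k_{ji}\theta_k)\partial_{\bar i}$, which is exactly the vertical lift of $(\nabla_X\theta)_i=X^j(\partial_j\theta_i-\Gamma^k_{ji}\theta_k)$; this proves $[{}^{H}\!X,{}^{V}\!\theta]={}^{V}\!(\nabla_X\theta)$.

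The real work is identity $(3)$. I would compute $[{}^{H}\!X,{}^{H}\!Y]$ in full and split the outcome into its $\partial_i$-part and its $\partial_{\bar i}$-part. The $\partial_i$-part collapses to $[X,Y]^i\partial_i$, and combining it with the $\partial_{\bar{}}$-term $p_h\Gamma^h_{ij}[X,Y]^j\partial_{\bar i}$ generated by the bracket reconstitutes the horizontal lift ${}^{H}\![X,Y]$. What is left is a purely vertical expression in which, after every $\partial_{\bar k}$ has acted on the $p$'s, the coefficient of $\partial_{\bar h}$ is $p_a\big(\partial_i\Gamma^a_{jh}-\partial_j\Gamma^a_{ih}+\Gamma^a_{il}\Gamma^l_{jh}-\Gamma^a_{jl}\Gamma^l_{ih}\big)X^iY^j$. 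The anticipated obstacle here is purely combinatorial bookkeeping: the summation indices must be tracked carefully so that this parenthesised quantity is recognised as the local curvature components $R^a_{ijh}$ and the whole term as $p_aR^a_{ijh}X^iY^j\partial_{\bar h}={}^{V}\!(pR(X,Y))$ with $pR(X,Y)=p_aR^a_{ijk}X^iY^j\,dx^k$. The only genuinely subtle point is matching the index convention for $R$ adopted in the statement; once the coordinate formula for the curvature tensor of $\nabla$ is substituted, identity $(3)$ follows.
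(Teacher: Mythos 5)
Your computation is correct and follows essentially the same route as the source: the paper offers no proof of Lemma \ref{lem_00}, quoting it from Yano--Ishihara \cite{Y.I}, where it is established by exactly this direct calculation in the induced coordinates $(x^{i},p_{i})$, using $\partial_{\bar{k}}(p_{h})=\delta_{kh}$, the vanishing of $\partial_{\bar{k}}$ on pullbacks of base functions, and the symmetry of the Christoffel symbols. In particular your parenthesised coefficient $\partial_{i}\Gamma^{a}_{jh}-\partial_{j}\Gamma^{a}_{ih}+\Gamma^{a}_{il}\Gamma^{l}_{jh}-\Gamma^{a}_{jl}\Gamma^{l}_{ih}$ is precisely $R^{a}_{ijh}$ in the convention $R(\partial_{i},\partial_{j})\partial_{h}=R^{a}_{ijh}\partial_{a}$, so the vertical remainder is $p_{a}R^{a}_{ijk}X^{i}Y^{j}\partial_{\bar{k}}={}^{V}\!(pR(X,Y))$ with the correct $+$ sign, matching the paper's definition $pR(X,Y)=p_{a}R^{a}_{ijk}X^{i}Y^{j}\,dx^{k}$.
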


Let $(M^{m}, g)$ be a Riemannian manifold, we define the map
\begin{eqnarray*}\begin{array}{ccc}
\Im^{0}_{1}(M)&\rightarrow&\Im^{1}_{0}(M)\\
\omega&\mapsto&\widetilde{\omega}
\end{array}\quad\textit{by}\quad
\begin{array}{ccc}
g(\widetilde{\omega}, X)&=&\omega(X),
\end{array}
\end{eqnarray*} 
for all $X\in\Im^{1}_{0}(M)$. Locally if $\omega=\omega_{i}dx^{i}\in\Im^{0}_{1}(M)$, we have 
\begin{eqnarray}\label{tilde_omega}
\widetilde{\omega}=g^{ij}\omega_{i}\partial_{j},
\end{eqnarray}
where $(g^{ij})$ is the inverse matrix of the matrix $(g_{ij})$.

The scalar product $g^{-1}=(g^{ij})$ is defined on the cotangent space $T_{x}^{\ast}M$ by 
\begin{eqnarray*}
g^{-1}(\omega, \theta) =g(\widetilde{\omega},\tilde{\theta}) =g^{ij}\omega_{i}\theta_{j},
\end{eqnarray*}
for all $x\in M$ and $\omega,\theta\in\Im^{0}_{1}(M)$. In this case we have $\widetilde{\omega}=g^{-1}\circ\omega$.

We also define the map 
\begin{eqnarray*}\begin{array}{ccc}
\Im^{1}_{0}(M)&\rightarrow&\Im^{0}_{1}(M)\\
X&\mapsto&\widetilde{X}
\end{array}\quad\textit{by}\quad
\begin{array}{ccc}
\widetilde{X}(Y)&=&g(X, Y),
\end{array}
\end{eqnarray*} 
for all $Y\in\Im^{1}_{0}(M)$. Locally if $X=X^{i}\partial_{i}\in\Im^{1}_{0}(M)$, we have 
\begin{eqnarray}\label{tilde_X}
\widetilde{X}=g_{ij}X^{i}dx^{j},
\end{eqnarray}
we also write $\widetilde{X}=g\circ X$.

\begin{lemma}[\cite{Zag5}] \label{lem_01}
Let $(M, g)$ be a Riemannian manifold, then we have the following:
\begin{eqnarray}\label{eq_00}
\widetilde{\widetilde{\omega}}=\omega &,&  \widetilde{\widetilde{X}}=X,\\\label{eq_01}
g^{-1}(\omega, \theta J)&=&g(J\widetilde{\omega}, \tilde{\theta}),\\\label{eq_02}
\nabla_{X}\widetilde{\omega}&=&\widetilde{\nabla_{X}\omega},\\\label{eq_03}
Xg^{-1}(\omega,\theta)&=&g^{-1}(\nabla_{X}\omega,\theta)+g^{-1}(\omega,\nabla_{X}\theta),\\\label{eq_04}
\stackon[-7pt]{$\omega R(X,Y)$}{\vstretch{1.5}{\hstretch{7.2}{\widetilde{\phantom{\;}}}}}&=&R(Y,X)\widetilde{\omega},
\end{eqnarray}  
for all $X, Y\in\Im^{1}_{0}(M)$, $\omega,\theta\in\Im^{0}_{1}(M)$ and $J\in\Im^{1}_{1}(M)$, where $\nabla$ is the Levi-Civita connection of $(M,g)$.
\end{lemma}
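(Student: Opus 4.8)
The plan is to check the five identities $(\ref{eq_00})$--$(\ref{eq_04})$ one at a time, trading between the two musical isomorphisms $\omega\mapsto\widetilde{\omega}$ and $X\mapsto\widetilde{X}$ either through their local component formulas $(\ref{tilde_omega})$, $(\ref{tilde_X})$ or through their characterizing properties $g(\widetilde{\omega},X)=\omega(X)$ and $\widetilde{X}(Y)=g(X,Y)$. The two statements in $(\ref{eq_00})$ are purely algebraic: substituting $(\ref{tilde_omega})$ into $(\ref{tilde_X})$ gives $\widetilde{\widetilde{\omega}}=g_{jk}(g^{ij}\omega_{i})\,dx^{k}=\delta^{i}_{k}\omega_{i}\,dx^{k}=\omega$ by $g^{ij}g_{jk}=\delta^{i}_{k}$, and symmetrically $\widetilde{\widetilde{X}}=X$; this just says that raising and lowering indices are mutually inverse. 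Identity $(\ref{eq_01})$ is likewise a one-line component check: writing $(\theta J)_{l}=\theta_{i}J^{i}_{l}$ for the covector $Y\mapsto\theta(JY)$, both $g^{-1}(\omega,\theta J)$ and $g(J\widetilde{\omega},\widetilde{\theta})$ reduce to $g^{kl}\omega_{k}\theta_{i}J^{i}_{l}$ after contracting with $g_{cd}g^{id}=\delta^{i}_{c}$.

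For $(\ref{eq_02})$ I would argue invariantly. For every $Y\in\Im^{1}_{0}(M)$, metric compatibility of the Levi-Civita connection gives
\begin{eqnarray*}
g(\nabla_{X}\widetilde{\omega},Y)&=&Xg(\widetilde{\omega},Y)-g(\widetilde{\omega},\nabla_{X}Y)\\
&=&X\omega(Y)-\omega(\nabla_{X}Y)=(\nabla_{X}\omega)(Y)=g(\widetilde{\nabla_{X}\omega},Y),
\end{eqnarray*}
and since $g$ is nondegenerate this forces $\nabla_{X}\widetilde{\omega}=\widetilde{\nabla_{X}\omega}$ (equivalently, in coordinates this is $\nabla_{k}g^{ij}=0$ applied to $(\ref{tilde_omega})$). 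Identity $(\ref{eq_03})$ then drops out by combining $(\ref{eq_02})$ with the definition $g^{-1}(\omega,\theta)=g(\widetilde{\omega},\widetilde{\theta})$: expand $Xg^{-1}(\omega,\theta)=Xg(\widetilde{\omega},\widetilde{\theta})$ by metric compatibility of $g$, then rewrite $g(\nabla_{X}\widetilde{\omega},\widetilde{\theta})=g(\widetilde{\nabla_{X}\omega},\widetilde{\theta})=g^{-1}(\nabla_{X}\omega,\theta)$ and likewise for the remaining term.

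The only identity requiring genuine care is $(\ref{eq_04})$, and it is where I expect the single real obstacle, namely pinning down the curvature sign conventions hidden in the notation $\omega R(X,Y)$ (defined, consistently with Lemma \ref{lem_00}, by $(\omega R(X,Y))(Z)=\omega(R(X,Y)Z)$). The plan is to pair both sides with an arbitrary $Z\in\Im^{1}_{0}(M)$ and invoke the defining property of the sharp: on the left, $g(\widetilde{\omega R(X,Y)},Z)=(\omega R(X,Y))(Z)=\omega(R(X,Y)Z)=g(\widetilde{\omega},R(X,Y)Z)$, while the right-hand side is $g(R(Y,X)\widetilde{\omega},Z)$. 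It then remains to establish the purely tensorial identity $g(\widetilde{\omega},R(X,Y)Z)=g(R(Y,X)\widetilde{\omega},Z)$, which follows from the skew-symmetry of $g(R(\cdot,\cdot)\cdot,\cdot)$ in its last two slots together with the skew-symmetry of $R$ in its first two arguments:
\begin{eqnarray*}
g(\widetilde{\omega},R(X,Y)Z)=g(R(X,Y)Z,\widetilde{\omega})=-g(R(X,Y)\widetilde{\omega},Z)=g(R(Y,X)\widetilde{\omega},Z).
\end{eqnarray*}
Since $Z$ is arbitrary and $g$ is nondegenerate, $(\ref{eq_04})$ follows. The one thing to watch throughout is that the index convention for $R^{a}_{ijk}$ matches the coordinate-free curvature symmetries used here; once that bookkeeping is fixed, every step is routine.
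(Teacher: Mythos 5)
Your proof is correct, but note that the paper itself contains no proof of Lemma \ref{lem_01} to compare against: the lemma is imported verbatim from \cite{Zag5}, so your blind argument is the only proof on the table, and it is a complete and standard one. Each step checks out against the paper's conventions: (\ref{eq_00}) and (\ref{eq_01}) are exactly the component computations from (\ref{tilde_omega}) and (\ref{tilde_X}), with $g^{ij}g_{jk}=\delta^{i}_{k}$ doing the work, and your check of (\ref{eq_01}) correctly needs no hypothesis on $J$ beyond $J\in\Im^{1}_{1}(M)$, matching the lemma's generality. Your invariant derivation of (\ref{eq_02}) from metric compatibility plus nondegeneracy, and the deduction of (\ref{eq_03}) by expanding $g^{-1}(\omega,\theta)=g(\widetilde{\omega},\widetilde{\theta})$, are the natural route and avoid any coordinate bookkeeping. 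On (\ref{eq_04}), you are right that the only genuine hazard is the convention hidden in the notation $\omega R(X,Y)$, and your resolution is the correct one: Lemma \ref{lem_00} fixes $pR(X,Y)=p_{a}R^{a}_{ijk}X^{i}Y^{j}\,dx^{k}$ with $R^{a}_{ijk}$ the components of $R$, which pins down $(\omega R(X,Y))(Z)=\omega(R(X,Y)Z)$; pairing against an arbitrary $Z$ and using only the skew-symmetry of $g(R(\cdot,\cdot)\cdot,\cdot)$ in its last two slots together with $R(X,Y)=-R(Y,X)$ then yields $g(\widetilde{\omega},R(X,Y)Z)=g(R(Y,X)\widetilde{\omega},Z)$, i.e.\ precisely the reversal of arguments that makes (\ref{eq_04}) read $R(Y,X)\widetilde{\omega}$ rather than $R(X,Y)\widetilde{\omega}$ --- the sign that formula (\ref{eq_04}) is silently encoding and that the paper later uses (e.g.\ in the proof of Theorem \ref{th_01}, where $g^{-1}(pR(Z,X),\theta)$ is converted to $g(R(\widetilde{p},\widetilde{\theta})X,Z)$). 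Nothing is missing.
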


\section{Berger-type deformed Sasaki metric}

Let $M^{r}$ be an $r$-dimensional differentiable manifold. An almost complex
structure $J$ on $M$ is a $(1,1)$-tensor field on $M$ such that $
J^{2}=-I$, ($I$ is the $(1,1)$-identity tensor field on $M.$ The pair $
(M^{r},J)$ is called an almost complex manifold. Since every almost complex manifold is even dimensional, We will take $r=2m$. Also, note that every
complex manifold (Topological space endowed with a holomorphic atlas)
carries a natural almost complex structure \cite{K.N}. The integrability of a complex structure $J$ on $M$ is equivalent to
the vanishing of the Nijenhuis tensor $N_{J}$: 
\begin{equation}
N_{J}(X,Y)=\left[ JX,JY\right] -J\left[ JX,Y\right] -J\left[ X,JY\right] -
\left[ X,Y\right]  \label{Nij-tensor}
\end{equation}
for all vector fields $X,Y$ on $M$. 

On an almost complex manifold $(M^{2m},J)$, a Hermitian metric is a
Riemannian metric $g$ on $M$ such that 
\begin{eqnarray}\label{Herm-m}
g(JX,Y)=-g(X,JY)\Leftrightarrow g(JX, JY)= g(X, Y),
\end{eqnarray}
or  from $(\ref{eq_01})$ equivalently 
\begin{eqnarray}\label{Herm-m1}
g^{-1}(\omega J, \theta )=-g^{-1}(\omega,\theta J) \Leftrightarrow g^{-1}(\omega J, \theta J )= g^{-1}(\omega,\theta),
\end{eqnarray}
for all $X, Y \in\Im^{1}_{0}(M)$ and $\omega, \theta \in\Im^{0}_{1}(M)$. 

The almost complex manifold $(M^{2m},J)$ having the Hermitian metric $g$ is called an almost Hermitian
manifold. Let $(M^{2m},J,g)$ be an almost Hermitian manifold. We define the
fundamental or K\"{a}hler $2$-form $\Omega $ on $M$ by 
\begin{equation}
\Omega (X,Y)=g(X,JY),  \label{fund-form}
\end{equation}
for any vector fields $X$ and $Y$ on $M$. A Hermitian metric $g$ on an
almost Hermitian manifold $M^{2m}$ is called a standard K\"{a}hler metric if the
fundamental $2$-form $\Omega $ is closed, i.e., $d\Omega =0$. In this case, the triple $(M^{2m},J,g)$ is called an almost standard K\"{a}hler manifold. If the
almost complex structure is integrable, then the triple $(M^{2m},J,g)$ is
called a standard K\"{a}hler manifold. Moreover,  the following conditions are
equivalent
\begin{enumerate}
\item $\nabla J=0$, 
\item $\nabla \Omega =0$,
\item $N_{J}=0$ and $d\Omega =0,$ 
\end{enumerate}
where $\nabla $ is the Levi-Civita connection of $g$ \cite{K.N}.

As a result, the almost Hermitian manifold $(M^{2m},J,g)$ is a standard K\"{a}hler
manifold if and only if $\nabla J=0$. Using the formula 
\begin{equation}\label{eq_05} 
\omega(\nabla_{X} J)= \nabla_{X} (\omega J)- (\nabla_{X}\omega) J.	
\end{equation}
Also, the almost Hermitian manifold $(M^{2m},J,g)$ is a standard K\"{a}hler
manifold if and only if
\begin{equation}\label{eq_06} 
\nabla_{X} (\omega J) = (\nabla_{X}\omega) J.	
\end{equation}
for all $X\in\Im^{1}_{0}(M)$, $\omega\in\Im^{0}_{1}(M)$.
The Riemannian curvature tensor $R$ of
a standard K\"{a}hler manifold possess the following properties: 
\begin{equation}\label{eq_07}
\left\{ 
\begin{array}{ll}
R(Y,Z)J & =JR(Y,Z), \\ 
R(JY,JZ) & =R(Y,Z), \\ 
R(JY,Z) & =-R(Y,JZ),
\end{array}%
\right.
\end{equation}
for all $Y,Z\in\Im^{1}_{0}(M)$.

\begin{lemma}
Let $(M^{2m}, J,g)$ be an almost  Hermitian manifold. We have the following:
\begin{eqnarray}\label{eq_08}
\widetilde{\omega J}&=&-J\widetilde{\omega},
\end{eqnarray}
for any $\omega\in\Im^{0}_{1}(M)$. 	
\end{lemma}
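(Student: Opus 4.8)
The plan is to prove the identity invariantly, by pairing both sides against an arbitrary vector field and then appealing to the non-degeneracy of $g$. First I would fix the convention for the action of $J$ on a $1$-form, namely $(\omega J)(X)=\omega(JX)$ for all $X\in\Im^{1}_{0}(M)$, which in coordinates reads $(\omega J)_{j}=\omega_{k}J^{k}_{j}$ and is the same convention already implicit in $(\ref{Herm-m1})$ and $(\ref{eq_01})$. With this understood, the two objects to be compared, $\widetilde{\omega J}$ and $-J\widetilde{\omega}$, are both honest vector fields on $M$, so it suffices to show $g(\widetilde{\omega J},X)=g(-J\widetilde{\omega},X)$ for every $X$.

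The key step is a short chain of equalities. By the defining property of the musical isomorphism $\omega\mapsto\widetilde{\omega}$, namely $g(\widetilde{\theta},X)=\theta(X)$, I have $g(\widetilde{\omega J},X)=(\omega J)(X)=\omega(JX)$ for all $X$. Applying the same defining property to $\omega$ itself rewrites this as $\omega(JX)=g(\widetilde{\omega},JX)$. Now the Hermitian condition $(\ref{Herm-m})$, in the form $g(\widetilde{\omega},JX)=-g(J\widetilde{\omega},X)$, gives $g(\widetilde{\omega J},X)=-g(J\widetilde{\omega},X)=g(-J\widetilde{\omega},X)$. Since $X\in\Im^{1}_{0}(M)$ is arbitrary and $g$ is non-degenerate, I conclude $\widetilde{\omega J}=-J\widetilde{\omega}$, which is exactly $(\ref{eq_08})$.

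As a cross-check I would also verify the statement in local coordinates: from $(\ref{tilde_omega})$ one has $\widetilde{\omega J}=g^{ij}(\omega J)_{i}\partial_{j}=g^{ij}\omega_{k}J^{k}_{i}\partial_{j}$, while $-J\widetilde{\omega}=-g^{ij}\omega_{i}J^{k}_{j}\partial_{k}$, and the Hermitian antisymmetry of $J$ with respect to $g$ forces these to agree after relabeling indices. There is essentially no genuine obstacle here: the entire content is the bookkeeping of a single sign, which is supplied by the antisymmetry in $(\ref{Herm-m})$. The only point requiring care is to keep the convention $(\omega J)(X)=\omega(JX)$ consistent with the one used in the rest of the paper, so that the sign emerging from the Hermitian property matches the minus sign asserted in the statement.
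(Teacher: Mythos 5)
Your proof is correct. Note that the paper states this lemma with no proof at all, so there is no authorial argument to compare against: your write-up supplies exactly the routine verification the author left implicit. The chain $g(\widetilde{\omega J},X)=(\omega J)(X)=\omega(JX)=g(\widetilde{\omega},JX)=-g(J\widetilde{\omega},X)$, followed by non-degeneracy of $g$, uses precisely the two ingredients available at that point in the paper, namely the defining property of the musical isomorphism and the Hermitian antisymmetry (\ref{Herm-m}); equivalently, the same identity drops out of (\ref{eq_01}) (which holds for arbitrary $J\in\Im^{1}_{1}(M)$) combined with (\ref{Herm-m}). Your insistence on fixing the convention $(\omega J)(X)=\omega(JX)$ is well placed, since it is the convention forced by (\ref{eq_01}) and (\ref{Herm-m1}), and your coordinate cross-check via (\ref{tilde_omega}) is also sound.
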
		

\begin{definition}\label{def_01}
Let $(M^{2m}, J,g)$ be an almost  Hermitian manifold and $T^{\ast}M$ be its cotangent bundle. A fiber-wise Berger-type deformation of the Sasaki metric noted ${}^{BS}\!g$ is defined on $T^{\ast}M$ by 
\begin{eqnarray*}
{}^{BS}\!g({}^{H}\!X,{}^{H}\!Y) &=&g(X,Y),\\
{}^{BS}\!g({}^{H}\!X,{}^{V}\!\theta) &=& 0,\\
{}^{BS}\!g({}^{V}\!\omega,{}^{V}\!\theta) &=& g^{-1}(\omega,\theta)+\delta^{2} g^{-1}(\omega, p J)g^{-1}(\theta, p J),
\end{eqnarray*}
for all  $X, Y\in\Im^{1}_{0}(M)$, $\omega,\theta\in\Im^{0}_{1}(M)$, where $\delta$ is some constant \cite{Zag4},\cite{Zag5},\cite{Zag9}, for version tangent bundle see \cite{Yam}.
\end{definition}

In the following, we put $\lambda =1+\delta ^{2}r^{2}$ and $r^{2}=g^{-1}(p,p)=|p|^{2}$. where $|.|$ denote the norm with respect to $g^{-1}$.

\begin{lemma}\label{lem_02}
Let $(M^{2m}, J, g)$ be a standard K\"{a}hler manifold and $f:\mathbb{R}\rightarrow\mathbb{R}$ a smooth function, we have the following:
\begin{enumerate}
\item ${}^{H}\!X(f(r^{2}))=0,$
\item ${}^{V}\!\theta(f(r^{2}))=2f^{\prime}(r^{2})g^{-1}(\theta,p),$	
\item ${}^{H}\!Xg^{-1}(\omega,p)=g^{-1}(\nabla_{X}\omega,p),$
\item ${}^{V}\! \theta g^{-1}(\omega,p)=g^{-1}(\omega, \theta),$
\item ${}^{H}\!Xg^{-1}(\omega,p J)=g^{-1}(\nabla_{X}\omega,p J),$
\item ${}^{V}\! \theta g^{-1}(\omega,p J)=g^{-1}(\omega, \theta J),$
\end{enumerate}
for all $ X \in \Im^{1}_{0}(M)$ and $\omega, \theta \in\Im^{0}_{1}(M)$, where $r^{2}=g^{-1}(p,p)$, see \cite{Zag1},\cite{Zag5}.
\end{lemma}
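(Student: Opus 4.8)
The plan is to reduce all six identities to two elementary differentiation rules for the adapted frame and then to observe that only item (5) genuinely uses the K\"{a}hler hypothesis. Throughout I work in the induced chart with $\partial_{\bar{i}}=\partial/\partial p_{i}$, so that $\partial_{\bar{i}}(p_{k})=\delta^{i}_{k}$, together with the coordinate expressions $(\ref{Hori})$ and $(\ref{Vert})$ for ${}^{H}\!X$ and ${}^{V}\!\theta$.

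The key preliminary I would record is a horizontal differentiation formula: for any fixed vector field $V=V^{i}\partial_{i}$ on $M$, the function $(x,p)\mapsto p(V)=p_{i}V^{i}$ on $T^{\ast}M$ satisfies
\[
{}^{H}\!X\bigl(p(V)\bigr)=p(\nabla_{X}V),\qquad {}^{V}\!\theta\bigl(p(V)\bigr)=\theta(V).
\]
Both are short: the vertical one is immediate from $\partial_{\bar{i}}(p_{j}V^{j})=V^{i}$, and the horizontal one follows by differentiating $p_{i}V^{i}$ with $(\ref{Hori})$, the extra term $p_{h}\Gamma^{h}_{ij}X^{j}\partial_{\bar{i}}$ supplying exactly the Christoffel correction that turns $X^{j}\partial_{j}V^{i}$ into $(\nabla_{X}V)^{i}$.

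With this in hand, items (3)--(6) become one-line specializations. For (3) and (4) I take $V=\widetilde{\omega}$, so that $p(\widetilde{\omega})=g^{-1}(\omega,p)$; then (3) reads ${}^{H}\!X\,g^{-1}(\omega,p)=p(\nabla_{X}\widetilde{\omega})=p(\widetilde{\nabla_{X}\omega})=g^{-1}(\nabla_{X}\omega,p)$ by $(\ref{eq_02})$, and (4) reads ${}^{V}\!\theta\,g^{-1}(\omega,p)=\theta(\widetilde{\omega})=g^{-1}(\omega,\theta)$. For (5) and (6) I take $V=J\widetilde{\omega}$, using $p(J\widetilde{\omega})=(pJ)(\widetilde{\omega})=g^{-1}(\omega,pJ)$. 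Item (6) is again immediate, ${}^{V}\!\theta\,g^{-1}(\omega,pJ)=\theta(J\widetilde{\omega})=(\theta J)(\widetilde{\omega})=g^{-1}(\omega,\theta J)$, and needs no K\"{a}hler assumption. Item (5) is where the hypothesis enters: ${}^{H}\!X\,g^{-1}(\omega,pJ)=p(\nabla_{X}(J\widetilde{\omega}))$, and I must push $\nabla_{X}$ through $J$. This is precisely the step that invokes $\nabla J=0$, equivalently $(\ref{eq_06})$, giving $\nabla_{X}(J\widetilde{\omega})=J\nabla_{X}\widetilde{\omega}=J\widetilde{\nabla_{X}\omega}$, whence $p(J\widetilde{\nabla_{X}\omega})=g^{-1}(\nabla_{X}\omega,pJ)$.

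Finally, items (1) and (2) follow by the chain rule once ${}^{H}\!X(r^{2})$ and ${}^{V}\!\theta(r^{2})$ are computed for $r^{2}=g^{-1}(p,p)$. The vertical one is a direct Leibniz computation, $\partial_{\bar{i}}(g^{kl}p_{k}p_{l})=2g^{ij}p_{j}$, giving ${}^{V}\!\theta(r^{2})=2g^{-1}(\theta,p)$ and hence (2). For (1) I would argue ${}^{H}\!X(r^{2})=0$ conceptually: the curve $t\mapsto(\gamma(t),p(t))$ obtained by parallel-transporting $p$ along a curve $\gamma$ with $\dot{\gamma}(0)=X$ has velocity ${}^{H}\!X$ at $t=0$, and $\nabla g^{-1}=0$ (from $(\ref{eq_03})$) keeps $g^{-1}(p,p)$ constant along it; alternatively a direct coordinate check shows the $\partial_{i}g^{kl}$ term and the $p_{h}\Gamma^{h}_{ij}$ term cancel by the symmetry of $\Gamma$ and of $g^{kl}$. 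The only real obstacle is bookkeeping, namely matching the Christoffel correction in $(\ref{Hori})$ against $\partial_{i}g^{kl}$ in (1) and against $\nabla_{X}V$ in the horizontal formula, and inserting the K\"{a}hler identity at exactly the right place in (5).
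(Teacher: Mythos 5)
Your proof is correct: the reduction of items (3)--(6) to the two differentiation rules ${}^{H}\!X(p(V))=p(\nabla_{X}V)$ and ${}^{V}\!\theta(p(V))=\theta(V)$ applied to $V=\widetilde{\omega}$ and $V=J\widetilde{\omega}$ checks out (including the identification $g^{-1}(\omega,pJ)=p(J\widetilde{\omega})$, which is consistent with $(\ref{eq_01})$), the coordinate cancellation giving ${}^{H}\!X(r^{2})=0$ is right, and you correctly isolate $\nabla J=0$ as entering only in item (5). The paper itself offers no proof of Lemma \ref{lem_02} --- it simply cites \cite{Zag1} and \cite{Zag5}, where the identities are established by exactly this kind of computation in the adapted frame --- so your argument is essentially the standard one and fills in what the paper leaves to the references.
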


\begin{lemma}\label{lem_03}
Let $(M^{2m}, J, g)$ be a standard K\"{a}hler manifold and $T^{\ast}M$ its cotangent bundle equipped with the Berger-type deformed Sasaki metric ${}^{BS}\!g$, we have the following:
\begin{eqnarray*}
(1)\;\; {}^{H}\!X{}^{BS}\!g({}^{V}\!\omega,{}^{V}\!\theta)&=&{}^{BS}\!g({}^{V}\!(\nabla_{X}\omega),{}^{V}\!\theta)+{}^{BS}\!g({}^{V}\!\omega,{}^{V}\!(\nabla_{X}\theta)),\\
(2)\;\;\; {}^{V}\!\eta {}^{BS}\!g({}^{V}\!\omega,{}^{V}\!\theta)&=&\delta^{2}g^{-1}(\omega,\eta J)g^{-1}(\theta,p J)+\delta^{2}g^{-1}(\omega,p J)g^{-1}(\theta, \eta J),
\end{eqnarray*}
for all $X\in\Im^{1}_{0}(M)$ and $\omega, \theta, \eta\in\Im^{0}_{1}(M)$, see as well \cite{Zag5}
\end{lemma}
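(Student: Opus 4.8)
The plan is to expand both identities directly from the explicit formula for the metric in Definition \ref{def_01}, namely
$${}^{BS}\!g({}^{V}\!\omega,{}^{V}\!\theta)=g^{-1}(\omega,\theta)+\delta^{2}g^{-1}(\omega,pJ)g^{-1}(\theta,pJ),$$
and then to differentiate term by term, keeping careful track of how the lifts ${}^{H}\!X$ and ${}^{V}\!\eta$ act on each of the two kinds of functions that appear: the pullback function $g^{-1}(\omega,\theta)$ on the one hand, and the fiber-dependent factors $g^{-1}(\cdot,pJ)$ on the other.

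For (1) I would apply ${}^{H}\!X$ to the right-hand side of the displayed formula using the Leibniz rule. The first summand $g^{-1}(\omega,\theta)$ is the pullback of a function on $M$, so ${}^{H}\!X$ acts on it as the base derivative $X$, and by $(\ref{eq_03})$ this produces $g^{-1}(\nabla_{X}\omega,\theta)+g^{-1}(\omega,\nabla_{X}\theta)$. For the deformation term I would differentiate each factor $g^{-1}(\cdot,pJ)$ by means of Lemma \ref{lem_02}(5), which gives ${}^{H}\!Xg^{-1}(\omega,pJ)=g^{-1}(\nabla_{X}\omega,pJ)$, obtaining $\delta^{2}\bigl[g^{-1}(\nabla_{X}\omega,pJ)g^{-1}(\theta,pJ)+g^{-1}(\omega,pJ)g^{-1}(\nabla_{X}\theta,pJ)\bigr]$. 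Collecting the four resulting terms and regrouping them, I recognize exactly the expansion of ${}^{BS}\!g({}^{V}\!(\nabla_{X}\omega),{}^{V}\!\theta)+{}^{BS}\!g({}^{V}\!\omega,{}^{V}\!(\nabla_{X}\theta))$ obtained by applying Definition \ref{def_01} to the covariant derivatives, so the two sides coincide.

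For (2) I would instead apply ${}^{V}\!\eta$. The key simplification here is that $g^{-1}(\omega,\theta)$ is constant along the fibers of $T^{\ast}M$, so the vertical field ${}^{V}\!\eta$ annihilates it and the first summand drops out. The remaining deformation term is handled by the Leibniz rule together with Lemma \ref{lem_02}(6), which gives ${}^{V}\!\eta\,g^{-1}(\omega,pJ)=g^{-1}(\omega,\eta J)$; differentiating each factor then produces precisely the two terms $\delta^{2}g^{-1}(\omega,\eta J)g^{-1}(\theta,pJ)$ and $\delta^{2}g^{-1}(\omega,pJ)g^{-1}(\theta,\eta J)$ on the right-hand side of (2).

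No step here is a genuine obstacle; both identities reduce to the product rule applied to the metric's definition. The only point that requires care is to identify correctly the action of each lift on the two function types: horizontal lifts reduce to covariant base derivatives on the pullback quantities (through $(\ref{eq_03})$ and Lemma \ref{lem_02}(5)), while vertical lifts kill pullback functions and differentiate the $pJ$-factors through Lemma \ref{lem_02}(6). Once these actions are applied consistently, the two formulas follow immediately.
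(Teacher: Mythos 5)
Your proof is correct and is essentially the paper's own route: the paper defers the verification to \cite{Zag5}, and the intended argument is exactly this direct Leibniz expansion of Definition~\ref{def_01}, with ${}^{H}\!X$ acting as $X$ on the pullback $g^{-1}(\omega,\theta)$ (handled via $(\ref{eq_03})$) and on the $pJ$-factors via Lemma~\ref{lem_02}(5), while ${}^{V}\!\eta$ annihilates the fiber-constant term and differentiates the $pJ$-factors via Lemma~\ref{lem_02}(6). No gaps; nothing further is needed.
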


We shall calculate the Levi-Civita connection $ {}^{BS}\nabla$ of $T^{\ast}M$ with Berger-type deformed Sasaki metric ${}^{BS}\!g$. The Koszul formula characterizes this connection:
\begin{eqnarray}\label{F.K}
2{}^{BS}\!g( {}^{BS}\nabla_{U}V,W)=U{}^{BS}\!g(V,W)+V{}^{BS}\!g(W,U)-W{}^{BS}\!g(U,V)\qquad\qquad\notag\\
+{}^{BS}\!g(W, [U,V])+{}^{BS}\!g(V, [W,U])-{}^{BS}\!g(U, [V,W]),\;\;
\end{eqnarray}
for all $U,V,W\in\Im^{1}_{0}(T^{\ast}M)$.

\begin{theorem}\label{th_01}
Let $(M^{2m}, J, g)$ be a standard K\"{a}hler manifold and $T^{\ast}M$ its cotangent bundle equipped with the Berger-type deformed Sasaki metric ${}^{BS}\!g$, then we have the following formulas:
\begin{eqnarray*}
(i)\;\;{}^{BS}\nabla_{{}^{H}\!X}{}^{H}\!Y&=&{}^{H}\!(\nabla_{X}Y)+\dfrac{1}{2}{}^{V}\!(pR(X,Y)),\\
(ii)\;\;{}^{BS}\nabla_{{}^{H}\!X}{}^{V}\!\theta&=&{}^{V}\!(\nabla_{X}\theta)+\dfrac{1}{2} \big({}^{H}\!(R(\widetilde{p},\widetilde{\theta})X)- \delta^{2}g^{-1}(\theta,p J){}^{H}\!(R( \widetilde{p},J \widetilde{p})X)\big),\\
(iii)\;\;{}^{BS}\nabla_{{}^{V}\!\omega}{}^{H}\!Y&=&\dfrac{1}{2} \big({}^{H}\!(R(\widetilde{p},\widetilde{\omega})Y)- \delta^{2}g^{-1}(\omega,p J){}^{H}\!(R(\widetilde{p},J \widetilde{p})Y)\big),\\
(iv)\;\;{}^{BS}\nabla_{{}^{V}\!\omega}{}^{V}\!\theta&=&\delta^{2}\big(g^{-1}(\omega, p J){}^{V}\!(\theta J)+g^{-1}(\theta, p J){}^{V}\!(\omega J)\big)\\
&&-\frac{\delta^{4}}{\lambda}\big(g^{-1}(\omega, p J)g^{-1}(\theta, p)+g^{-1}(\omega, p)g^{-1}(\theta, p J)\big){}^{V}\!(p J),
\end{eqnarray*}
for all  $X,Y\in \Im^{1}_{0}(M)$ and $\omega,\theta\in\Im^{0}_{1}(M)$, where $\nabla$ is the Levi-Civita connection of $(M^{2m}, J, g)$ and $R$ is its curvature tensor, (for anti-paraK\"{a}hler manifold, see \cite{Zag5}).
\end{theorem}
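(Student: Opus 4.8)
The plan is to extract all four identities from the Koszul formula \eqref{F.K} by evaluating it against the adapted frame $\{{}^{H}\!Z,\,{}^{V}\!\eta\}$: for each choice of $U,V$ among horizontal and vertical lifts I will compute $2\,{}^{BS}\!g({}^{BS}\nabla_U V,W)$ first for $W={}^{H}\!Z$ and then for $W={}^{V}\!\eta$, and read off the horizontal and vertical components of ${}^{BS}\nabla_U V$ from the resulting inner products. The three ingredients feeding the computation are the metric values of Definition \ref{def_01}, the Lie brackets of Lemma \ref{lem_00}, and the differentiation rules of Lemmas \ref{lem_02} and \ref{lem_03}.

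For (i) I take $U={}^{H}\!X$, $V={}^{H}\!Y$. Against $W={}^{H}\!Z$ every term reduces to a function pulled back from $M$ --- a horizontal lift differentiates such a function as the base field does, a vertical lift annihilates it --- so the right-hand side of \eqref{F.K} collapses to the base Koszul formula $2g(\nabla_X Y,Z)$, yielding the horizontal part ${}^{H}(\nabla_X Y)$. Against $W={}^{V}\!\eta$ the only surviving contribution comes from the vertical curvature term in $[{}^{H}\!X,{}^{H}\!Y]={}^{H}[X,Y]+{}^{V}(pR(X,Y))$, producing $\tfrac12{}^{V}(pR(X,Y))$. For the vertical outputs in (ii)--(iv) the bracket $[{}^{H}\!X,{}^{V}\!\theta]={}^{V}(\nabla_X\theta)$ together with Lemma \ref{lem_03}(1) makes the $W={}^{V}\!\eta$ tests telescope to $2\,{}^{BS}\!g({}^{V}(\nabla_X\theta),{}^{V}\!\eta)$ in (ii) and to $0$ in (iv), so that ${}^{BS}\nabla_{{}^{H}\!X}{}^{V}\!\theta$ has vertical part ${}^{V}(\nabla_X\theta)$ while ${}^{BS}\nabla_{{}^{V}\!\omega}{}^{V}\!\theta$ is purely vertical and ${}^{BS}\nabla_{{}^{V}\!\omega}{}^{H}\!Y$ purely horizontal.

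The horizontal parts of (ii) and (iii) are where the K\"ahler curvature identities enter. Testing ${}^{BS}\nabla_{{}^{H}\!X}{}^{V}\!\theta$ against $W={}^{H}\!Z$ leaves only ${}^{BS}\!g({}^{V}\!\theta,{}^{V}(pR(Z,X)))$, which by Definition \ref{def_01} splits into a $g^{-1}$ pairing and a $\delta^2$ correction. Converting cotangent pairings into vector ones via \eqref{tilde_omega}, using $\widetilde{pR(Z,X)}=R(X,Z)\widetilde p$ from \eqref{eq_04} and $\widetilde{pJ}=-J\widetilde p$ from \eqref{eq_08}, and applying the pair symmetry of $R$, this becomes $g(R(\widetilde p,\widetilde\theta)X,Z)-\delta^2 g^{-1}(\theta,pJ)\,g(R(\widetilde p,J\widetilde p)X,Z)$, which exhibits the claimed horizontal component after division by $2$. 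Identity (iii) then follows either by the same test or, more quickly, from torsion-freeness: ${}^{BS}\nabla_{{}^{V}\!\omega}{}^{H}\!Y={}^{BS}\nabla_{{}^{H}\!Y}{}^{V}\!\omega+[{}^{V}\!\omega,{}^{H}\!Y]$, and $[{}^{V}\!\omega,{}^{H}\!Y]=-{}^{V}(\nabla_Y\omega)$ cancels the vertical part of (ii).

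The genuinely delicate case is the vertical component of (iv). Here all brackets vanish, so the $W={}^{V}\!\eta$ test of \eqref{F.K} reduces to the three differentiation terms governed by Lemma \ref{lem_03}(2); after invoking the Hermitian symmetry \eqref{Herm-m1} of $g^{-1}$ the six summands collapse to $-2\delta^2\big(g^{-1}(\omega,pJ)g^{-1}(\theta,\eta J)+g^{-1}(\theta,pJ)g^{-1}(\omega,\eta J)\big)$. The obstacle is that this determines ${}^{BS}\nabla_{{}^{V}\!\omega}{}^{V}\!\theta$ only through the \emph{deformed} fiber inner product ${}^{BS}\!g({}^{V}\!\alpha,{}^{V}\!\eta)=g^{-1}(\alpha,\eta)+\delta^2 g^{-1}(\alpha,pJ)g^{-1}(\eta,pJ)$, which is $g^{-1}$ perturbed by a rank-one term along $pJ$; inverting it --- a Sherman--Morrison step in which $g^{-1}(pJ,pJ)=r^2$ produces the normalizing factor $\lambda=1+\delta^2 r^2$ --- is precisely what forces the correction $-\tfrac{\delta^4}{\lambda}(\cdots){}^{V}(pJ)$. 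Concretely I expect to confirm the stated expression by pairing it back with ${}^{V}\!\eta$: the $\delta^4$ contributions cancel in pairs, and the factor $\lambda$ coming from ${}^{BS}\!g({}^{V}(pJ),{}^{V}\!\eta)=\lambda g^{-1}(\eta,pJ)$ absorbs the $\tfrac1\lambda$, reproducing exactly the $-2\delta^2(\cdots)$ above. This bookkeeping --- keeping the $\delta^2$ and $\delta^4$ orders separate while repeatedly using $g^{-1}(\alpha J,\beta J)=g^{-1}(\alpha,\beta)$ --- is the main computational hurdle of the theorem.
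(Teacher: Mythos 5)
Your proposal is correct and takes essentially the same route as the paper: the Koszul formula (\ref{F.K}) tested against the adapted frame, with Lemma \ref{lem_00} for the brackets, Definition \ref{def_01} and Lemma \ref{lem_03} for the metric and derivative terms, and the identities (\ref{eq_01}), (\ref{eq_04}), (\ref{eq_07})--(\ref{eq_08}) to convert the curvature pairings. Since the paper only writes out (i)--(ii) and dismisses (iii)--(iv) with ``similar calculations,'' your torsion-freeness shortcut for (iii) and your rank-one (Sherman--Morrison) inversion for (iv) --- which does check out, as the $\delta^{4}$ terms cancel and ${}^{BS}\!g({}^{V}\!(pJ),{}^{V}\!\eta)=\lambda\, g^{-1}(\eta,pJ)$ absorbs the $1/\lambda$ --- are welcome supplementary detail rather than a different method; the only slip is the phrase ``telescope \dots to $0$ in (iv),'' which should refer to the $W={}^{H}\!Z$ test (giving pure verticality), since your own later paragraph correctly computes the $W={}^{V}\!\eta$ test as the nonzero $-2\delta^{2}\big(g^{-1}(\omega,pJ)g^{-1}(\theta,\eta J)+g^{-1}(\theta,pJ)g^{-1}(\omega,\eta J)\big)$.
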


\begin{proof}
The proof of Theorem $\ref{th_01}$ follows directly from Kozul formula $(\ref{F.K})$, Lemma $\ref{lem_00}$, Definition $\ref{def_01}$ and Lemma $\ref{lem_03}$.

$(1)$\;Direct calculations give
\begin{eqnarray*}
2{}^{BS}\!g({}^{BS}\nabla_{{}^{H}\!X}{}^{H}\!Y,{}^{H}\!Z)&=&{}^{H}\!X{}^{BS}\!g({}^{H}\!Y,{}^{H}\!Z)+{}^{H}\!Y{}^{BS}\!g({}^{H}\!Z,{}^{H}\!X)-{}^{H}\!Z{}^{BS}\!g({}^{H}\!X,{}^{H}\!Y)\\
&&+{}^{BS}\!g({}^{H}\!Z,[{}^{H}\!X,{}^{H}\!Y])+{}^{BS}\!g({}^{H}\!Y,[{}^{H}\!Z,{}^{H}\!X])-{}^{BS}\!g({}^{H}\!X,[{}^{H}\!Y,{}^{H}\!Z])\\
&=&Xg(Y,Z)+Yg(Z,X)-Zg(X,Y)+g(Z,[X,Y])\\
&&+g(Y,[Z,X])-g(X,[Y,Z])\\
&=&2g(\nabla_{X}Y,Z)\\
&=& 2{}^{BS}\!g({}^{H}\!(\nabla_{X}Y),{}^{H}\!Z),\\
{}^{BS}\!g({}^{BS}\nabla_{{}^{H}\!X}{}^{H}\!Y,{}^{V}\!\eta)&=&{}^{H}\!X{}^{BS}\!g({}^{H}\!Y,{}^{V}\!\eta)+{}^{H}\!Y{}^{BS}\!g({}^{V}\!\eta,{}^{H}\!X)-{}^{V}\!\eta {}^{BS}\!g({}^{H}\!X,{}^{H}\!Y)\\
&&+{}^{BS}\!g({}^{V}\!\eta,[{}^{H}\!X,{}^{H}\!Y])+{}^{BS}\!g({}^{H}\!Y,[{}^{V}\!\eta,{}^{H}\!X])-{}^{BS}\!g({}^{H}\!X,[{}^{H}\!Y,{}^{V}\!\eta])\\
&=&{}^{BS}\!g({}^{V}\!\eta,[{}^{H}\!X,{}^{H}\!Y])\\
&=&{}^{BS}\!g({}^{V}\!(pR(X,Y)),{}^{V}\!\eta),
\end{eqnarray*}
Thus, we find
\begin{eqnarray*}
{}^{BS}\nabla_{{}^{H}\!X}{}^{H}\!Y={}^{H}\!(\nabla_{X}Y)+\dfrac{1}{2}{}^{V}\!(pR(X,Y)).
\end{eqnarray*}

$(2)$ In a similar way,
\begin{eqnarray*}
2{}^{BS}\!g({}^{BS}\nabla_{{}^{H}\!X}{}^{V}\!\theta,{}^{H}\!Z)&=&{}^{H}\!X{}^{BS}\!g({}^{V}\!\theta,{}^{H}\!Z)+{}^{V}\!\theta {}^{BS}\!g({}^{H}\!Z,{}^{H}\!X)-{}^{H}\!Z{}^{BS}\!g({}^{H}\!X,{}^{V}\!\theta)\\
&&+{}^{BS}\!g({}^{H}\!Z,[{}^{H}\!X,{}^{V}\!\theta])+{}^{BS}\!g({}^{V}\!\theta,[{}^{H}\!Z,{}^{H}\!X])-{}^{BS}\!g({}^{H}\!X,[{}^{V}\!\theta,{}^{H}\!Z])\\
&=&{}^{BS}\!g({}^{V}\!\theta,[{}^{H}\!Z,{}^{H}\!X])\\
&=& {}^{BS}\!g({}^{V}\!(pR(Z,X)),{}^{V}\!\theta)\\
&=&g^{-1}(pR(Z,X),\theta)+\delta^{2}g^{-1}(pR(Z,X),p J)g^{-1}(\theta,p J),
\end{eqnarray*}
From $(\ref{eq_04})$, we have
\begin{eqnarray*}
g^{-1}(pR(Z,X),\theta)&=&g(\stackon[-7pt]{pR(Z,X)}{\vstretch{1.5}{\hstretch{7.2}{\widetilde{\phantom{\;}}}}},\tilde{\theta})=g(R(X,Z)\tilde{p},\tilde{\theta})=g(R(\tilde{p},\tilde{\theta} )X, Z)\\
&=&{}^{BS}\!g({}^{H}\!(R(\tilde{p},\tilde{\theta})X),{}^{H}\!Z).
\end{eqnarray*}
On the other hand, using  $(\ref{eq_01})$, $(\ref{eq_04})$ and $(\ref{eq_07})$, we have 
\begin{eqnarray*}
g^{-1}(pR(Z,X),p J)&=&g(J(\stackon[-7pt]{pR(Z,X)}{\vstretch{1.5}{\hstretch{7.2}{\widetilde{\phantom{\;}}}}}),\tilde{p})=g(J R(X,Z)\tilde{p},\tilde{p})\notag\\
&=&g( R(X, Z)J \tilde{p},\tilde{p})=g( R(J \tilde{p},\tilde{p})X, Z)\\
&=&{}^{BS}\!g( {}^{H}\!(R(J \tilde{p},\tilde{p})X), {}^{H}\!Z),
\end{eqnarray*}
then,
\begin{eqnarray*}
2{}^{BS}\!g({}^{BS}\nabla_{{}^{H}\!X}{}^{V}\!\theta,{}^{H}\!Z)={}^{BS}\!g({}^{H}\!(R(\tilde{p},\tilde{\theta})X)-\delta^{2}g^{-1}(\theta,p J){}^{H}\!(R(\tilde{p},J \tilde{p})X), {}^{H}\!Z),
\end{eqnarray*}	
and also with direct calculations, we obtain
\begin{eqnarray*}
2{}^{BS}\!g({}^{BS}\nabla_{{}^{H}\!X}{}^{V}\!\theta,{}^{V}\!\eta)&=&{}^{H}\!X{}^{BS}\!g({}^{V}\!\theta,{}^{V}\!\eta)+{}^{V}\!\theta {}^{BS}\!g({}^{V}\!\eta,{}^{H}\!X)-{}^{V}\!\eta {}^{BS}\!g({}^{H}\!X,{}^{V}\!\theta)\\
&&+{}^{BS}\!g({}^{V}\!\eta,[{}^{H}\!X,{}^{V}\!\theta])+{}^{BS}\!g({}^{V}\!\theta,[{}^{V}\!\eta,{}^{H}\!X])-{}^{BS}\!g({}^{H}\!X,[{}^{V}\!\theta,{}^{V}\!\eta])\\
&=&{}^{H}\!X{}^{BS}\!g({}^{V}\!\theta,{}^{V}\!\eta)+{}^{BS}\!g({}^{V}\!\eta,[{}^{H}\!X,{}^{V}\!\theta])+{}^{BS}\!g({}^{V}\!\theta,[{}^{V}\!\eta,{}^{H}\!X]).
\end{eqnarray*}
Using the first formula of Lemma $\ref{lem_03}$ we have:
\begin{eqnarray*}
2{}^{BS}\!g({}^{BS}\nabla_{{}^{H}\!X}{}^{V}\!\theta,{}^{V}\!\eta)&=&{}^{BS}\!g({}^{V}\!(\nabla_{X}\theta),{}^{V}\!\eta)+{}^{BS}\!g({}^{V}\!\theta,{}^{V}\!(\nabla_{X}\eta))\\
&&+{}^{BS}\!g({}^{V}\!\eta,{}^{V}\!(\nabla_{X}\theta))-{}^{BS}\!g({}^{V}\!\theta,{}^{V}\!(\nabla_{X}\eta))\\
&=&2{}^{BS}\!g({}^{V}\!(\nabla_{X}\theta),{}^{V}\!\eta).
\end{eqnarray*}
Which gives the formula 
\begin{eqnarray*}
{}^{BS}\nabla_{{}^{H}\!X}{}^{V}\!\theta={}^{V}\!(\nabla_{X}\theta)+\dfrac{1}{2} \big({}^{H}\!(R(\tilde{p},\tilde{\theta})X)- g^{-1}(\theta,p J){}^{H}\!(R(\tilde{p},J \tilde{p})X)\big).
\end{eqnarray*}
Similar calculations obtain the other formulas.
\end{proof}

As a consequence of  Theorem \ref{th_01}, we get the following Lemma.

\begin{lemma}\label{lem_04}
Let $(M^{n}, J, g)$ be a standard K\"{a}hler manifold and $(T^{\ast}M,{}^{BS}\!g)$ its cotangent bundle equipped with the Berger-type deformed Sasaki metric, then
\begin{eqnarray*}
{}^{BS}\nabla_{{}^{H}\!X}{}^{V}\!p&=&0,\\
{}^{BS}\nabla_{{}^{V}\!p}{}^{H}\!X&=&0,\\
{}^{BS}\nabla_{{}^{V}\!\omega}{}^{V}\!p&=&{}^{V}\!\omega+\frac{\delta^{2}}{\lambda}g^{-1}(\omega, pJ){}^{V}\!(p J),\\
{}^{BS}\nabla_{{}^{V}\!p}{}^{V}\!\omega&=&\frac{\delta^{2}}{\lambda}g^{-1}(\omega, pJ){}^{V}\!(p J),\\
{}^{BS}\nabla_{{}^{V}\!p}{}^{V}\!p&=&{}^{V}\!p,
\end{eqnarray*}
for all $X\in \Im^{1}_{0}(M)$ and $\omega\in\Im^{0}_{1}(M)$.
\end{lemma}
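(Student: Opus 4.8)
The plan is to reduce every identity to Theorem \ref{th_01} by expanding the Liouville field in the natural coframe as ${}^{V}\!p=p_i\,{}^{V}\!(dx^i)$, where the $p_i$ are the fiber coordinates and hence genuine \emph{functions} on $T^{\ast}M$, not constants. Since ${}^{BS}\nabla$ is $C^{\infty}(T^{\ast}M)$-linear in its lower slot and obeys the Leibniz rule ${}^{BS}\nabla_{U}(f\,W)=(Uf)\,W+f\,{}^{BS}\nabla_{U}W$ in its upper slot, each of the five formulas splits into a sum over $i$ of terms supplied by Theorem \ref{th_01} applied to $\theta=dx^{i}$ (or $\omega=dx^{i}$), plus coefficient-derivative terms. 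I would then resum using $\sum_i p_i\,dx^i=p$ together with its consequences $\sum_i p_i\,\widetilde{dx^i}=\widetilde{p}$ and $\sum_i p_i\,{}^{V}\!(dx^iJ)={}^{V}\!(pJ)$, evaluated pointwise at $(x,p)$.

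Two algebraic facts carry the horizontal slots. Curvature antisymmetry gives $R(\widetilde{p},\widetilde{p})=0$, and the skew-symmetry of $g^{-1}(\cdot,\cdot J)$ from $(\ref{Herm-m1})$ combined with the symmetry of $g^{-1}$ gives $g^{-1}(p,pJ)=0$. Hence, whenever the curvature corrections of Theorem \ref{th_01}(ii)--(iii) are summed against $p_i$ they reassemble into $\frac{1}{2}\big({}^{H}\!(R(\widetilde{p},\widetilde{p})X)-\delta^{2}g^{-1}(p,pJ)\,{}^{H}\!(R(\widetilde{p},J\widetilde{p})X)\big)=0$, yielding ${}^{BS}\nabla_{{}^{H}\!X}{}^{V}\!p=0$ and ${}^{BS}\nabla_{{}^{V}\!p}{}^{H}\!X=0$. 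For the first of these one also checks that the Leibniz term ${}^{H}\!X(p_i)\,{}^{V}\!(dx^i)=p_a\Gamma_{ij}^{a}X^{j}\,\partial_{\bar i}$ cancels exactly against the vertical piece $p_i\,{}^{V}\!(\nabla_{X}dx^i)$, since $\nabla_{X}dx^i=-\Gamma^{i}_{kj}X^j\,dx^k$.

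For the vertical identities I would apply Theorem \ref{th_01}(iv) to $\theta=dx^i$ and sum against $p_i$, using $\sum_i p_i\,g^{-1}(dx^i,p)=r^{2}$ and $\sum_i p_i\,g^{-1}(dx^i,pJ)=g^{-1}(p,pJ)=0$; the scalar coefficients then collapse through the identity $1-\frac{\delta^{2}r^{2}}{\lambda}=\frac{1}{\lambda}$, producing the $\frac{\delta^{2}}{\lambda}g^{-1}(\omega,pJ)\,{}^{V}\!(pJ)$ terms in the third and fourth formulas. For the third one additionally records the upper-slot Leibniz contribution ${}^{V}\!\omega(p_i)\,{}^{V}\!(dx^i)={}^{V}\!\omega$. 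Finally ${}^{BS}\nabla_{{}^{V}\!p}{}^{V}\!p={}^{V}\!p$ follows by writing the lower ${}^{V}\!p=p_j\,{}^{V}\!(dx^j)$ and feeding in the already-proved third identity, the residual term again being killed by $g^{-1}(p,pJ)=0$.

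The main subtlety, and the step most easily botched, is the difference between the two slots. When ${}^{V}\!p$ sits in the differentiated (upper) slot, the Leibniz rule forces a genuine extra contribution from differentiating the coefficients $p_i$: the fact ${}^{V}\!\omega(p_i)=\omega_i$ generates the term ${}^{V}\!\omega$ in the third formula, which a careless substitution $\theta=p$ straight into Theorem \ref{th_01}(iv) would silently discard. As a consistency check, this extra ${}^{V}\!\omega$ is precisely the bracket $[{}^{V}\!\omega,{}^{V}\!p]$, which a one-line coordinate computation gives as ${}^{V}\!\omega$, in agreement with the torsion-freeness of ${}^{BS}\nabla$ (so that the third minus the fourth formula returns ${}^{V}\!\omega$). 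By contrast, when ${}^{V}\!p$ appears only in the lower slot, $C^{\infty}(T^{\ast}M)$-linearity applies with no correction term. Keeping these two mechanisms cleanly separated is the whole content of the argument; once they are, each line is a routine resummation against $\sum_i p_i\,dx^i=p$.
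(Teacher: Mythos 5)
Your proposal is correct and follows essentially the paper's own route: the paper states Lemma \ref{lem_04} simply as a consequence of Theorem \ref{th_01} without writing out the computation, and your expansion ${}^{V}\!p=p_{i}\,{}^{V}\!(dx^{i})$ with tensoriality in the lower slot, the Leibniz rule in the upper slot, and the resummations $R(\widetilde{p},\widetilde{p})=0$, $g^{-1}(p,pJ)=0$, $1-\delta^{2}r^{2}/\lambda=1/\lambda$ supplies exactly the omitted details. In particular, your isolation of the Leibniz term ${}^{V}\!\omega(p_{i})\,{}^{V}\!(dx^{i})={}^{V}\!\omega$ (cross-checked against $[{}^{V}\!\omega,{}^{V}\!p]={}^{V}\!\omega$ and torsion-freeness) correctly captures the one point where naive substitution of $\theta=p$ into Theorem \ref{th_01}(iv) would fail.
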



\section{Unit cotangent bundle with Berger-type deformed Sasaki metric}

The unit cotangent (sphere) bundle over a standard K\"{a}hler manifold  $(M^{n}, J, g)$, is the hyper-surface 
\begin{eqnarray}\label{eq_09}
T^{\ast}_{1}M&=&\big\{(x,p)\in T^{\ast}M,\,g^{-1}(p,p)=1\big\}.
\end{eqnarray}
The unit normal vector field to $T^{\ast}_{1}M$ is given by
\begin{eqnarray}\label{U.N.V}
\mathcal{N}:T^{\ast}M&\rightarrow& T(T^{\ast}M)\notag\\
(x,p)&\mapsto&\mathcal{N}_{(x,p)}={}^{V}\!p.
\end{eqnarray}	

The tangential lift ${}^{T}\!\omega$ with respect to ${}^{BS}\!g$ of a covector $\omega\in T^{\ast}_{x}M$ to $(x, p)\in T^{\ast}_{1}M$ as the tangential projection of the vertical lift of $\omega$ to  $(x, p)$ with respect to $\mathcal{N}$, that is
\begin{equation*}
{}^{T}\!\omega={}^{V}\!\omega-{}^{BS}\!g_{(x,p)}({}^{V}\!\omega,\mathcal{N}_{(x,p)})\mathcal{N}_{(x,p)}={}^{V}\!\omega-g^{-1}_{x}(\omega,p){}^{V}\!p_{(x,p)}.
\end{equation*}

For the sake of notational clarity, we will use $\overline{\omega}=\omega-g^{-1}(\omega,p)p$, then ${}^{T}\!\omega={}^{V}\!\overline{\omega}$.

From the above, we get the direct sum decomposition
\begin{eqnarray}\label{D.S_2}
\qquad\qquad T_{(x,p)}T^{\ast}M=T_{(x,p)}T^{\ast}_{1}M\oplus span\{\mathcal{N}_{(x,p)}\}=T_{(x,p)}T^{\ast}_{1}M\oplus span\{{}^{V}\!p_{(x,p)}\},
\end{eqnarray}
where $(x, p)\in T^{\ast}_{1}M$.

Indeed, if $W\in T_{(x,p)}T^{\ast}M$, then they exist $X\in T_{x}M$ and $\omega\in T^{\ast}_{x}M$, such that
\begin{eqnarray}\label{eq_10}
W&=&{}^{H}\!X+{}^{V}\!\omega\notag\\
&=&{}^{H}\!X+{}^{T}\!\omega+{}^{BS}\!g_{(x,p)}({}^{V}\!\omega,\mathcal{N}_{(x,p)})\mathcal{N}_{(x,p)}\notag\\
&=&{}^{H}\!X+{}^{T}\!\omega+g^{-1}_{x}(\omega,p){}^{V}\!p_{(x,p)}.
\end{eqnarray}
From the (\ref{eq_10}) we can say that the tangent space $T_{(x,p)}T^{\ast}_{1}M$ of $T^{\ast}_{1}M$ at $(x, p)$ is given by
\begin{eqnarray*}
T_{(x,p)}T^{\ast}_{1}M= \{{}^{H}\!X+{}^{T}\!\omega\,/\, X\in T_{x}M, \omega\in \{p\}^{\bot}\subset T^{\ast}_{x}M\},
\end{eqnarray*}
where $\{p\}^{\bot}=\big\{\omega\in T^{\ast}_{x}M,\,g^{-1}(\omega,p)=0\big\}$. Hence $T_{(x,p)}T^{\ast}_{1}M$  is spanned by vectors of the form ${}^{H}\!X$ and ${}^{T}\!\omega$.

Given a covector field $\omega$ on $M$, the tangential lift ${}^{T}\!\omega$ of $\omega$ is given by
\begin{equation}\label{eq_11}
{}^{T}\!\omega_{(x,p)}=\big({}^{V}\!\omega-{}^{BS}\!g({}^{V}\!\omega,\mathcal{N})\mathcal{N}\big)_{(x,p)}={}^{V}\!\omega_{(x,p)}-g^{-1}_{x}(\omega_{x},p){}^{V}\!p_{(x,p)}.
\end{equation}

If ${}^{BS}\!\hat{g}$ is the Riemannian metric on $T^{\ast}_{1}M$ induced by ${}^{BS}\!g$, then the Levi-Civita connection ${}^{BS}\widehat{\nabla}$ of $(T^{\ast}_{1}M,{}^{BS}\!\hat{g})$ is characterized by the formula:
\begin{eqnarray}\label{C.F.U}
{}^{BS}\widehat{\nabla}_{U}V= {}^{BS}\nabla_{U}V-{}^{BS}\!g({}^{BS}\nabla_{U}V,\mathcal{N})\mathcal{N},
\end{eqnarray}
for all $U,V\in\Im^{1}_{0}(T^{\ast}M)$.

\begin{theorem}\label{th_02}
Let $(M^{n}, J, g)$ be a standard K\"{a}hler manifold and $(T^{\ast}_{1}M,{}^{BS}\!\hat{g})$ its unit cotangent bundle equipped with the Berger-type deformed Sasaki metric, then we have the following formulas:
\begin{eqnarray*}
{}^{BS}\widehat{\nabla}_{{}^{H}\!X}{}^{H}\!Y&=&{}^{H}\!(\nabla_{X}Y)+\frac{1}{2}{}^{T}\!(pR(X,Y)),\\
{}^{BS}\widehat{\nabla}_{{}^{H}\!X}{}^{T}\!\theta&=&{}^{T}\!(\nabla_{X}\theta)+\dfrac{1}{2} \big({}^{H}\!(R(\tilde{p},\tilde{\theta})X)- \delta^{2}g^{-1}(\theta,p J){}^{H}\!(R(\tilde{p},J \tilde{p})X)\big),\\
{}^{BS}\widehat{\nabla}_{{}^{T}\!\omega}{}^{H}\!Y&=&\dfrac{1}{2} \big({}^{H}\!(R(\tilde{p},\tilde{\omega})Y)- \delta^{2}g^{-1}(\omega,p J){}^{H}\!(R(\tilde{p},J \tilde{p})Y)\big),\\
{}^{BS}\widehat{\nabla}_{{}^{T}\!\omega}{}^{T}\!\theta&=&-g^{-1}(\theta, p ){}^{T}\!\omega +\delta^{2}\big(g^{-1}(\omega, p J){}^{T}\!(\theta J)+g^{-1}(\theta, p J){}^{T}\!(\omega J)\big)\\
&&-\delta^{2}\big(g^{-1}(\omega, p J)g^{-1}(\theta, p)+g^{-1}(\omega, p)g^{-1}(\theta, p J)\big){}^{T}\!(p J),
\end{eqnarray*}
for all $X,Y\in\Im^{1}_{0}(M)$ and $\omega,\theta\in\Im^{0}_{1}(M)$, where $\nabla$ is the Levi-Civita connection and $R$ is its curvature tensor.
\end{theorem}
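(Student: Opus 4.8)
The plan is to derive each of the four connection formulas for $T^{\ast}_{1}M$ directly from the characterizing equation $(\ref{C.F.U})$, namely ${}^{BS}\widehat{\nabla}_{U}V={}^{BS}\nabla_{U}V-{}^{BS}\!g({}^{BS}\nabla_{U}V,\mathcal{N})\mathcal{N}$, by feeding in the ambient formulas from Theorem $\ref{th_01}$ and then projecting off the $\mathcal{N}={}^{V}\!p$ component. The recurring mechanical step is the same in all four cases: take the ambient covariant derivative, compute its ${}^{BS}\!g$-inner product with ${}^{V}\!p$ to identify the normal part, and rewrite the surviving vertical lifts as tangential lifts using the identity ${}^{T}\!\omega={}^{V}\!\omega-g^{-1}(\omega,p){}^{V}\!p$ valid on $T^{\ast}_{1}M$ (so that $r^{2}=g^{-1}(p,p)=1$ and $\lambda=1+\delta^{2}$ throughout).

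First I would treat ${}^{BS}\widehat{\nabla}_{{}^{H}\!X}{}^{H}\!Y$. By Theorem $\ref{th_01}(i)$ the ambient value is ${}^{H}\!(\nabla_{X}Y)+\tfrac12{}^{V}\!(pR(X,Y))$; the horizontal term is already tangent to $T^{\ast}_{1}M$, so only the vertical term can contribute a normal part. Computing ${}^{BS}\!g({}^{V}\!(pR(X,Y)),{}^{V}\!p)$ and subtracting the resulting multiple of ${}^{V}\!p$ is exactly what converts ${}^{V}\!(pR(X,Y))$ into the tangential lift ${}^{T}\!(pR(X,Y))$, giving the first formula. The two mixed cases, ${}^{BS}\widehat{\nabla}_{{}^{H}\!X}{}^{T}\!\theta$ and ${}^{BS}\widehat{\nabla}_{{}^{T}\!\omega}{}^{H}\!Y$, require a preliminary observation: since ${}^{T}\!\theta={}^{V}\!\overline{\theta}$ with $\overline{\theta}=\theta-g^{-1}(\theta,p)p$, I would expand the ambient derivative by linearity over the vertical argument, apply Theorem $\ref{th_01}(ii)$ and $(iii)$, and then check that the horizontal outputs ${}^{H}\!(R(\tilde p,\tilde\theta)X)$ etc. are automatically tangent to $T^{\ast}_{1}M$ (their ${}^{BS}\!g$-inner product with ${}^{V}\!p$ vanishes because ${}^{BS}\!g$ pairs horizontal against vertical as $0$), while the leftover $g^{-1}(\theta,p)$-type terms arising from the $\overline{\theta}$ correction cancel against the normal projection. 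The vertical term ${}^{V}\!(\nabla_{X}\theta)$ projects to ${}^{T}\!(\nabla_{X}\theta)$ as before.

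The main work, and the step I expect to be the genuine obstacle, is the purely vertical case ${}^{BS}\widehat{\nabla}_{{}^{T}\!\omega}{}^{T}\!\theta$. Here I would substitute ${}^{T}\!\omega={}^{V}\!\omega-g^{-1}(\omega,p){}^{V}\!p$ and ${}^{T}\!\theta={}^{V}\!\theta-g^{-1}(\theta,p){}^{V}\!p$ into ${}^{BS}\nabla$, expand using Theorem $\ref{th_01}(iv)$ together with the specializations in Lemma $\ref{lem_04}$ (for the terms involving ${}^{V}\!p$, e.g. ${}^{BS}\nabla_{{}^{V}\!\omega}{}^{V}\!p$, ${}^{BS}\nabla_{{}^{V}\!p}{}^{V}\!\omega$, and ${}^{BS}\nabla_{{}^{V}\!p}{}^{V}\!p$), and keep careful track of the directional derivatives of the coefficient functions $g^{-1}(\omega,p)$ and $g^{-1}(\theta,p)$ using Lemma $\ref{lem_02}(4)$. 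The delicate bookkeeping is twofold: first, collecting all contributions and simplifying the scalar coefficients on $T^{\ast}_{1}M$ where $r^{2}=1$ and $\lambda=1+\delta^{2}$, which is where the factor $\delta^{4}/\lambda$ from Theorem $\ref{th_01}(iv)$ collapses into the cleaner $\delta^{2}$ coefficient appearing in the stated formula; and second, computing the normal projection ${}^{BS}\!g({}^{BS}\nabla_{{}^{T}\!\omega}{}^{T}\!\theta,{}^{V}\!p)$, which uses Lemma $\ref{lem_03}$ and the Hermitian identity $(\ref{Herm-m1})$ to evaluate the various $g^{-1}(\cdot,pJ)$ pairings, and then re-expressing every surviving ${}^{V}\!(\theta J)$, ${}^{V}\!(\omega J)$, ${}^{V}\!(pJ)$ and ${}^{V}\!\omega$ as its tangential counterpart. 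The extra term $-g^{-1}(\theta,p){}^{T}\!\omega$ in the final formula is precisely the tangential-lift artifact that should emerge from differentiating the coefficient $g^{-1}(\theta,p)$ along ${}^{T}\!\omega$; verifying that it comes out with the correct sign and that no spurious ${}^{V}\!p$ component survives is the crux of the computation.
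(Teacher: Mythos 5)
Your plan is correct and follows essentially the same route as the paper's own proof: substitute the ambient formulas of Theorem \ref{th_01} and Lemma \ref{lem_04} into the Gauss-type formula (\ref{C.F.U}), differentiate the coefficients $g^{-1}(\cdot,p)$ via Lemma \ref{lem_02}, convert surviving vertical lifts into tangential lifts, and use $\lambda=1+\delta^{2}$ on $T^{\ast}_{1}M$ so that the $\delta^{4}/\lambda$ and $\delta^{2}/\lambda$ contributions combine into the clean $\delta^{2}$ coefficient of the stated fourth formula. One cosmetic slip: the term $-g^{-1}(\theta,p)\,{}^{T}\!\omega$ comes from $-g^{-1}(\theta,p)\,{}^{BS}\nabla_{{}^{T}\!\omega}{}^{V}\!p$ via Lemma \ref{lem_04} (differentiating the vector factor ${}^{V}\!p$), not from differentiating the scalar coefficient $g^{-1}(\theta,p)$, which only produces ${}^{V}\!p$-directional terms; this does not affect the validity of your argument.
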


\begin{proof} In the proof, we will use the Theorem \ref{th_01}, Lemma \ref{lem_04} and the formula (\ref{C.F.U}).

$1.$ By direct calculation, we have		
\begin{eqnarray*}
{}^{BS}\widehat{\nabla}_{{}^{H}\!X}{}^{H}\!Y&=& {}^{BS}\nabla_{{}^{H}\!X}{}^{H}\!Y-{}^{BS}\!g({}^{BS}\nabla_{{}^{H}\!X}{}^{H}\!Y,\mathcal{N})\mathcal{N}\\
&=& {}^{H}\!(\nabla_{X}Y)+\frac{1}{2}{}^{V}\!(pR(X,Y))-{}^{BS}\!g(\frac{1}{2}{}^{V}\!(pR(X,Y)),\mathcal{N})\mathcal{N}\\
&=&{}^{H}\!(\nabla_{X}Y)+\frac{1}{2}{}^{T}\!(pR(X,Y)).
\end{eqnarray*}

$2.$ We have ${}^{BS}\widehat{\nabla}_{{}^{H}\!X}{}^{T}\!\theta= {}^{BS}\nabla_{{}^{H}\!X}{}^{T}\!\theta-{}^{BS}\!g({}^{BS}\nabla_{{}^{H}\!X}{}^{T}\!\theta,\mathcal{N})\mathcal{N}$, by direct calculation, we get 
\begin{eqnarray*}
{}^{BS}\nabla_{{}^{H}\!X}{}^{T}\!\theta&=&{}^{T}\!(\nabla_{X}\theta)+\dfrac{1}{2} \big({}^{H}\!(R(\tilde{p},\tilde{\theta})X)+ \delta^{2}g^{-1}(\theta,p J){}^{H}\!(R(J \tilde{p},\tilde{p})X)\big)
\end{eqnarray*}
and
\begin{eqnarray*}
{}^{BS}\!g( {}^{BS}\nabla_{{}^{H}\!X}{}^{T}\!\theta,\mathcal{N})\mathcal{N}&=&0.
\end{eqnarray*}
Hence
\begin{eqnarray*}
{}^{BS}\widehat{\nabla}_{{}^{H}\!X}{}^{T}\!\theta &=&{}^{T}\!(\nabla_{X}\theta)+\dfrac{1}{2} \big({}^{H}\!(R(\tilde{p},\tilde{\theta})X)+ \delta^{2}g^{-1}(\theta,p J){}^{H}\!(R(J \tilde{p},\tilde{p})X)\big).
\end{eqnarray*}

$3.$ Also, we have ${}^{BS}\widehat{\nabla}_{{}^{T}\!\omega}{}^{H}\!Y= {}^{BS}\nabla_{{}^{T}\!\omega}{}^{H}\!Y-{}^{BS}\!g({}^{BS}\nabla_{{}^{T}\!\omega}{}^{H}\!Y,\mathcal{N})\mathcal{N}$,  by direct calculation, we get 
\begin{eqnarray*}
{}^{BS}\nabla_{{}^{T}\!\omega}{}^{H}\!Y&=&\dfrac{1}{2} \big({}^{H}\!(R(\tilde{p},\tilde{\omega})Y)+ \delta^{2}g^{-1}(\omega,p J){}^{H}\!(R(J \tilde{p},\tilde{p})Y)\big)
\end{eqnarray*}
and
\begin{eqnarray*}
{}^{BS}\!g( {}^{BS}\nabla_{{}^{T}\!\omega}{}^{H}\!Y,\mathcal{N})\mathcal{N}&=&0.	
\end{eqnarray*}
Hence
\begin{eqnarray*}
{}^{BS}\widehat{\nabla}_{{}^{T}\!\omega}{}^{H}\!X&=&\dfrac{1}{2} \big({}^{H}\!(R(\tilde{p},\tilde{\omega})Y)+ \delta^{2}g^{-1}(\omega,p J){}^{H}\!(R(J \tilde{p},\tilde{p})Y)\big).
\end{eqnarray*}

$4.$ In the same way above, we have ${}^{BS}\widehat{\nabla}_{{}^{T}\!\omega}{}^{T}\!\theta= {}^{BS}\nabla_{{}^{T}\!\omega}{}^{T}\!\theta-{}^{BS}\!g({}^{BS}\nabla_{{}^{T}\!\omega}{}^{T}\!\theta,\mathcal{N})\mathcal{N}$,
\begin{eqnarray*}
{}^{BS}\nabla_{{}^{T}\!\omega}{}^{T}\!\theta&=&\delta^{2}\big(g^{-1}(\omega, p J){}^{V}\!(\theta J)+g^{-1}(\theta, p J){}^{V}\!(\omega J)\big)-\delta^{2}\big(g^{-1}(\omega, p J)g^{-1}(\theta, p)\\
&&+g^{-1}(\omega, p)g^{-1}(\theta, p J)\big){}^{V}\!(p J)-g^{-1}(\theta, p){}^{V}\!\omega-g^{-1}(\omega, \theta){}^{V}\!p\\
&&+2g^{-1}(\omega, p)g^{-1}(\theta, p){}^{V}\!p,
\end{eqnarray*}
and
\begin{eqnarray*}
{}^{BS}\!g( {}^{BS}\nabla_{{}^{T}\!\omega}{}^{T}\!\theta,\mathcal{N})\mathcal{N}&=&\delta^{2}\big(g^{-1}(\omega,Jp)g^{-1}(\theta J,p)+g^{-1}(\theta,Jp)g^{-1}(\omega J,p)\big){}^{V}\!p\\
&&-g^{-1}(\omega, \theta){}^{V}\!p+g^{-1}(\omega, p)g^{-1}(\theta, p){}^{V}\!p.
\end{eqnarray*}
Hence
\begin{eqnarray*}
{}^{BS}\widehat{\nabla}_{{}^{T}\!\omega}{}^{T}\!\theta&=&-g^{-1}(\theta, p ){}^{T}\!\omega +\delta^{2}\big(g^{-1}(\omega, p J){}^{T}\!(\theta J)+g^{-1}(\theta, p J){}^{T}\!(\omega J)\big)\\
&&-\delta^{2}\big(g^{-1}(\omega, p J)g^{-1}(\theta, p)+g^{-1}(\omega, p)g^{-1}(\theta, p J)\big){}^{T}\!(p J).
\end{eqnarray*}
\end{proof}	

\section{Geodesics of the Berger-type deformed Sasaki metric}

Let  $\gamma:I \rightarrow M $ be a curve on $M$, $I$ is an open interval of $\mathbb{R}$  and $C$ be a curve on $T^{\ast}M$ expressed by  $C =(\gamma(t),\vartheta(t))$, for all $t \in I$, where $\vartheta(t)\in T^{\ast}M$ i.e. $\vartheta(t)$ is a covector field along $\gamma$. 

\begin{lemma}[\cite{Zag1}] \label{lem_05}
Let $(M,g)$ be a Riemannian manifold, and $\nabla$ denote the Levi-Civita connection of $(M,g)$. If $C =(\gamma(t),\vartheta(t))$ is a curve on $T^{\ast}M$, then
\begin{equation*}
\dot{C} = \dot{\gamma}^{H} + (\nabla_{\dot{\gamma}}\vartheta)^{V},
\end{equation*}
where $\dot{\gamma}=\frac{d\,\gamma}{d\,t}$ and $\dot{C}=\frac{d\,C}{d\,t}$.
\end{lemma}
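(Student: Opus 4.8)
The plan is to work entirely in the local coordinates $(x^{i},p_{i})$ on $\pi^{-1}(U)\subset T^{\ast}M$ introduced in the Preliminaries, to compute $\dot{C}$ directly by differentiating the coordinate expression of $C$, and then to match the result against the horizontal and vertical lifts written in the natural frame $\{\partial_{i},\partial_{\bar{i}}\}$. Writing $\gamma(t)$ with components $x^{i}(t)$ and $\vartheta(t)$ with components $p_{i}(t)$, so that $C(t)$ has coordinates $(x^{i}(t),p_{i}(t))$, the chain rule gives at once
\[
\dot{C}=\dot{x}^{i}\partial_{i}+\dot{p}_{i}\partial_{\bar{i}}.
\]

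Next I would compute the two candidate terms separately. By $(\ref{Hori})$, the horizontal lift of $\dot{\gamma}=\dot{x}^{i}\partial_{i}$ evaluated at $(x,p)=C(t)$ is
\[
{}^{H}\!\dot{\gamma}=\dot{x}^{i}\partial_{i}+p_{h}\Gamma_{ij}^{h}\dot{x}^{j}\partial_{\bar{i}}.
\]
For the vertical term, I recall that the covariant derivative along $\gamma$ of the covector field $\vartheta=p_{i}\,dx^{i}$ has components
\[
(\nabla_{\dot{\gamma}}\vartheta)_{i}=\dot{p}_{i}-p_{a}\Gamma_{ij}^{a}\dot{x}^{j},
\]
using that $\dot{x}^{j}\partial_{j}p_{i}=\dot{p}_{i}$ along the curve; hence, by the definition $(\ref{Vert})$ of the vertical lift,
\[
{}^{V}\!(\nabla_{\dot{\gamma}}\vartheta)=\big(\dot{p}_{i}-p_{a}\Gamma_{ij}^{a}\dot{x}^{j}\big)\partial_{\bar{i}}.
\]

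Finally, adding the two lifts and relabelling the dummy index $h=a$, the Christoffel contributions $p_{h}\Gamma_{ij}^{h}\dot{x}^{j}$ and $-p_{a}\Gamma_{ij}^{a}\dot{x}^{j}$ cancel, leaving exactly $\dot{x}^{i}\partial_{i}+\dot{p}_{i}\partial_{\bar{i}}=\dot{C}$, which is the asserted identity $\dot{C}={}^{H}\!\dot{\gamma}+{}^{V}\!(\nabla_{\dot{\gamma}}\vartheta)$. There is no serious obstacle here: the computation is routine, and the only point requiring care is writing the covariant derivative of the covector field $\vartheta$ along $\gamma$ correctly, after which the cancellation of the connection coefficients against those carried by the horizontal lift is automatic.
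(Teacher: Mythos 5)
Your computation is correct: writing $C(t)=(x^{i}(t),p_{i}(t))$, the components $(\nabla_{\dot{\gamma}}\vartheta)_{i}=\dot{p}_{i}-p_{a}\Gamma_{ij}^{a}\dot{x}^{j}$ are exactly right, and their vertical lift cancels the Christoffel term $p_{h}\Gamma_{ij}^{h}\dot{x}^{j}\partial_{\bar{i}}$ carried by ${}^{H}\!\dot{\gamma}$, leaving $\dot{C}=\dot{x}^{i}\partial_{i}+\dot{p}_{i}\partial_{\bar{i}}$ as required. The paper itself gives no proof of this lemma (it is quoted from \cite{Zag1}), and your local-coordinate argument is precisely the standard one used in that reference, so your proposal matches the intended proof.
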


Subsequently we denote $\gamma^{\prime}=\frac{d\,x}{d\,t}$, $\gamma^{\prime\prime}=\nabla_{\gamma^{\prime}}\gamma^{\prime}$, $\vartheta^{\prime}=\nabla_{\gamma^{\prime}}\vartheta$,  $\vartheta^{\prime\prime}=\nabla_{\gamma^{\prime}}\vartheta^{\prime}$ and $C^{\prime}=\frac{d\,C}{d\,t}$. Then
\begin{equation}\label{eq_12}
C^{\prime}={}^{H}\!\gamma^{\prime} + {}^{V}\!\vartheta^{\prime}.
\end{equation}

\begin{theorem}\label{th_03}
Let $(M^{2m}, J, g)$ be a standard K\"{a}hler manifold and $(T^{\ast}M, {}^{BS}\!g)$ its cotangent bundle equipped with the Berger-type deformed Sasaki metric. The curve $C =(\gamma(t),\vartheta(t))$ is a geodesic on $T^{\ast}M$ if and only if
\begin{eqnarray}\label{eq_13}
\left\{
\begin{array}{lll}
\gamma^{\prime\prime}=\mathcal{R}(\widetilde{\vartheta^{\prime}},\widetilde{\vartheta})\gamma^{\prime}\\
\vartheta^{\prime\prime}=2\delta^{2}g^{-1}(\vartheta^{\prime},\vartheta J)(\dfrac{\delta^{2}}{\lambda}g^{-1}(\vartheta^{\prime},\vartheta)\vartheta J-\vartheta^{\prime} J),
\end{array}
\right.
\end{eqnarray}
where $\mathcal{R}(\widetilde{\vartheta^{\prime}},\widetilde{\vartheta})=R(\widetilde{\vartheta^{\prime}},\widetilde{\vartheta})+\delta^{2}g^{-1}(\vartheta^{\prime},\vartheta J)R(\widetilde{\vartheta},J\widetilde{\vartheta})$ and $R$ is the curvature tensor of the manifold $(M^{2m}, J, g)$.
\end{theorem}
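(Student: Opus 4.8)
The plan is to compute the covariant acceleration ${}^{BS}\nabla_{C'}C'$ and to use that, ${}^{BS}\nabla$ being the Levi-Civita connection of ${}^{BS}\!g$, the curve $C$ is a geodesic exactly when this acceleration vanishes; since its horizontal and vertical parts are linearly independent, each must vanish on its own, and this produces the two equations of $(\ref{eq_13})$. The whole computation rests on the expansion $C'={}^{H}\!\gamma'+{}^{V}\!\vartheta'$ from $(\ref{eq_12})$ and on the single observation that, along $C$, the fibre variable is $p=\vartheta(t)$, so that every formula of Theorem $\ref{th_01}$ is applied with $p\mapsto\vartheta$, $\widetilde{p}\mapsto\widetilde{\vartheta}$ and $\lambda=1+\delta^{2}g^{-1}(\vartheta,\vartheta)$.

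First I would expand by bilinearity
\[
{}^{BS}\nabla_{C'}C'=\frac{D}{dt}{}^{H}\!\gamma'+\frac{D}{dt}{}^{V}\!\vartheta',
\]
reading each summand as a covariant derivative along $C$, namely $\frac{D}{dt}{}^{H}\!\gamma'={}^{BS}\nabla_{{}^{H}\!\gamma'}{}^{H}\!\gamma'+{}^{BS}\nabla_{{}^{V}\!\vartheta'}{}^{H}\!\gamma'$ and $\frac{D}{dt}{}^{V}\!\vartheta'={}^{BS}\nabla_{{}^{H}\!\gamma'}{}^{V}\!\vartheta'+{}^{BS}\nabla_{{}^{V}\!\vartheta'}{}^{V}\!\vartheta'$. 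The key bookkeeping point is that the ``base'' terms ${}^{H}\!(\nabla_{X}Y)$ and ${}^{V}\!(\nabla_{X}\theta)$ in formulas $(i)$ and $(ii)$ are to be read as the genuine derivatives along $\gamma$, i.e. ${}^{H}\!(\gamma'')$ and ${}^{V}\!(\vartheta'')$, whereas the curvature and algebraic terms in $(i)$--$(iv)$ are tensorial and get evaluated directly at $(\gamma(t),\vartheta(t))$. Applying $(i)$, the term $\tfrac{1}{2}{}^{V}\!(pR(\gamma',\gamma'))$ disappears because $R(\gamma',\gamma')=0$, and applying $(iii)$, $(ii)$, $(iv)$ produces all remaining contributions.

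Collecting the horizontal component, the two curvature terms produced by ${}^{BS}\nabla_{{}^{V}\!\vartheta'}{}^{H}\!\gamma'$ through $(iii)$ and by ${}^{BS}\nabla_{{}^{H}\!\gamma'}{}^{V}\!\vartheta'$ through $(ii)$ are identical and each carries a factor $\tfrac{1}{2}$, so they add up to
\[
{}^{H}\!(\gamma'')+{}^{H}\!\big(R(\widetilde{\vartheta},\widetilde{\vartheta'})\gamma'\big)-\delta^{2}g^{-1}(\vartheta',\vartheta J){}^{H}\!\big(R(\widetilde{\vartheta},J\widetilde{\vartheta})\gamma'\big).
\]
Equating this to zero and using the antisymmetry $R(\widetilde{\vartheta},\widetilde{\vartheta'})=-R(\widetilde{\vartheta'},\widetilde{\vartheta})$ gives the first equation $\gamma''=\mathcal{R}(\widetilde{\vartheta'},\widetilde{\vartheta})\gamma'$ with $\mathcal{R}$ as stated. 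The vertical component comes only from ${}^{V}\!(\vartheta'')$ and from the two coincident terms of $(iv)$, namely
\[
{}^{V}\!(\vartheta'')+2\delta^{2}g^{-1}(\vartheta',\vartheta J){}^{V}\!(\vartheta' J)-\frac{2\delta^{4}}{\lambda}g^{-1}(\vartheta',\vartheta J)g^{-1}(\vartheta',\vartheta){}^{V}\!(\vartheta J);
\]
setting it to zero and factoring $2\delta^{2}g^{-1}(\vartheta',\vartheta J)$ yields the second equation of $(\ref{eq_13})$.

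I do not expect a genuine obstacle here: once Theorem $\ref{th_01}$ is available the argument is essentially bookkeeping. The one place that demands care is the passage from the formal four-term expansion of ${}^{BS}\nabla_{C'}C'$ to the covariant derivative along $C$ -- one must correctly recognize the base pieces of $(i)$ and $(ii)$ as $\gamma''$ and $\vartheta''$ (rather than as derivatives of ambient extensions), use $R(\gamma',\gamma')=0$, and then track the sign conventions together with the antisymmetry of $R$ so that the horizontal equation reorganizes exactly into the tensor $\mathcal{R}(\widetilde{\vartheta'},\widetilde{\vartheta})$.
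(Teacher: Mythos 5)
Your proposal is correct and takes essentially the same route as the paper's own proof: expand $C'={}^{H}\!\gamma'+{}^{V}\!\vartheta'$ via $(\ref{eq_12})$, substitute $p=\vartheta(t)$ into the four formulas of Theorem \ref{th_01}, note that the $\tfrac{1}{2}{}^{V}\!(pR(\gamma',\gamma'))$ term vanishes and that the two mixed curvature contributions coincide, then separate horizontal and vertical components and set each to zero, using the antisymmetry of $R$ to arrive at $\mathcal{R}(\widetilde{\vartheta'},\widetilde{\vartheta})$. Your collected horizontal and vertical expressions agree term by term with those in the paper, so there is no gap.
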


\begin{proof} 
From formula (\ref{eq_12}) and Theorem \ref{th_01}, we obtain
\begin{eqnarray*}
{}^{BS}\nabla_{C^{\prime}}C^{\prime} & = &{}^{BS}\nabla_{\displaystyle({}^{H}\!\gamma^{\prime} + {}^{V}\!\vartheta^{\prime})}({}^{H}\!\gamma^{\prime} + {}^{V}\!\vartheta^{\prime}) \\
& = &{}^{BS}\nabla_{\displaystyle{}^{H}\gamma^{\prime}}{}^{H}\gamma^{\prime} +{}^{BS}\nabla_{\displaystyle{}^{H}\!\gamma^{\prime}}{}^{V}\!\vartheta^{\prime}+{}^{BS}\nabla_{{}^{V}\!\vartheta^{\prime}}{}^{H}\!\gamma^{\prime}+{}^{BS}\nabla_{{}^{V}\!\vartheta^{\prime}}{}^{V}\!\vartheta^{\prime} \\
&=&{}^{H}\!\gamma^{\prime\prime}+{}^{H}\!(R(\widetilde{\vartheta},\widetilde{\vartheta^{\prime}})\gamma^{\prime}-\delta^{2}g^{-1}(\vartheta^{\prime},\vartheta J)R(\widetilde{\vartheta},J\widetilde{\vartheta})\gamma^{\prime}))+{}^{V}\!\vartheta^{\prime\prime}\\
&&+2\delta^{2}g^{-1}(\vartheta^{\prime},\vartheta J){}^{V}\!(\vartheta^{\prime} J)-\dfrac{2\delta^{4}}{\lambda}g^{-1}(\vartheta^{\prime},\vartheta)g^{-1}(\vartheta^{\prime},J\vartheta){}^{V}\!(\vartheta J)\\
&=&{}^{H}\!\big(\gamma^{\prime\prime}+R(\widetilde{\vartheta},\widetilde{\vartheta^{\prime}})\gamma^{\prime}-\delta^{2}g^{-1}(\vartheta^{\prime},\vartheta J)R(\widetilde{\vartheta},J\widetilde{\vartheta})\gamma^{\prime}\big)\\
&&+{}^{V}\!\big(\vartheta^{\prime\prime}+2\delta^{2}g^{-1}(\vartheta^{\prime},\vartheta J)(\vartheta^{\prime} J-\dfrac{\delta^{2}}{\lambda}g^{-1}(\vartheta^{\prime},\vartheta)\vartheta J)\big)\\
&=&{}^{H}\!\big(\gamma^{\prime\prime}-(R(\widetilde{\vartheta^{\prime}},\widetilde{\vartheta})\gamma^{\prime}+\delta^{2}g^{-1}(\vartheta^{\prime},\vartheta J)R(\widetilde{\vartheta},J\widetilde{\vartheta})\gamma^{\prime})\big)\\
&&+{}^{V}\!\big(\vartheta^{\prime\prime}-2\delta^{2}g^{-1}(\vartheta^{\prime},\vartheta J)(\dfrac{\delta^{2}}{\lambda}g^{-1}(\vartheta^{\prime},\vartheta)\vartheta J-\vartheta^{\prime} J)\big).
\end{eqnarray*}
If we put ${}^{BS}\nabla_{C^{\prime}}C^{\prime}$ equal to zero, we find $(\ref{eq_13})$.
\end{proof}

A curve $C=(\gamma(t),\vartheta(t))$ on $T^{\ast}M$ is said to be a horizontal lift of the curve $\gamma$ on $M$ if and only if $\vartheta^{\prime}=0$ \cite{Y.I}. Thus, we have

\begin{corollary}\label{co_01} Let $(M^{2m}, J, g)$ be a standard K\"{a}hler manifold and $(T^{\ast}M, {}^{BS}\!g)$ its cotangent bundle equipped with the Berger-type deformed Sasaki metric. The horizontal lift of any geodesic on $(M^{2m}, J, g)$ is a geodesic on $(T^{\ast}M, {}^{BS}\!g)$.	
\end{corollary}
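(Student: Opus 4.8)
The plan is to read off the result directly from the geodesic system of Theorem \ref{th_03} and to verify that a horizontal lift satisfies it trivially. First I would record the two defining conditions. By the remark preceding the corollary, a curve $C=(\gamma(t),\vartheta(t))$ is a horizontal lift of $\gamma$ precisely when $\vartheta^{\prime}=\nabla_{\gamma^{\prime}}\vartheta=0$ along $\gamma$; and by hypothesis $\gamma$ is a geodesic of $(M^{2m},J,g)$, so $\gamma^{\prime\prime}=\nabla_{\gamma^{\prime}}\gamma^{\prime}=0$. The whole proof then amounts to substituting $\vartheta^{\prime}\equiv 0$ into the system $(\ref{eq_13})$.

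For the horizontal equation, I would observe that the musical isomorphism is linear, so $\widetilde{\vartheta^{\prime}}=0$, and $g^{-1}(\vartheta^{\prime},\vartheta J)=0$ by linearity of $g^{-1}$ in its first argument. Since $R$ is bilinear, both summands of $\mathcal{R}(\widetilde{\vartheta^{\prime}},\widetilde{\vartheta})=R(\widetilde{\vartheta^{\prime}},\widetilde{\vartheta})+\delta^{2}g^{-1}(\vartheta^{\prime},\vartheta J)R(\widetilde{\vartheta},J\widetilde{\vartheta})$ vanish, so the first equation reduces to $\gamma^{\prime\prime}=0$, which holds by hypothesis. For the vertical equation, $\vartheta^{\prime}=0$ along the whole curve forces $\vartheta^{\prime\prime}=\nabla_{\gamma^{\prime}}\vartheta^{\prime}=0$, while the right-hand side carries the factor $g^{-1}(\vartheta^{\prime},\vartheta J)=0$ and hence vanishes as well; the second equation reduces to $0=0$.

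Both equations of $(\ref{eq_13})$ being satisfied, Theorem \ref{th_03} guarantees that $C$ is a geodesic on $(T^{\ast}M,{}^{BS}\!g)$. I do not expect a genuine obstacle here: the deformation parameter $\delta$ and the curvature coupling enter $(\ref{eq_13})$ only through terms proportional to $\vartheta^{\prime}$ or to $g^{-1}(\vartheta^{\prime},\vartheta J)$, all of which vanish identically for a horizontal lift, so the system degenerates to the decoupled pair $\gamma^{\prime\prime}=0$, $\vartheta^{\prime\prime}=0$ independently of $\delta$ and of the K\"{a}hler curvature. The only point deserving a word of care is that $\vartheta^{\prime}\equiv 0$ along the curve also yields $\vartheta^{\prime\prime}\equiv 0$, which is immediate from $\vartheta^{\prime\prime}=\nabla_{\gamma^{\prime}}\vartheta^{\prime}$.
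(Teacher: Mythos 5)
Your proposal is correct and matches the paper's own (implicit) argument: the corollary is stated as an immediate consequence of Theorem \ref{th_03}, since $\vartheta^{\prime}=0$ kills every curvature and deformation term in the system $(\ref{eq_13})$, reducing it to $\gamma^{\prime\prime}=0$ and $\vartheta^{\prime\prime}=0$. Your added care in noting that $\vartheta^{\prime}\equiv 0$ along the curve implies $\vartheta^{\prime\prime}=\nabla_{\gamma^{\prime}}\vartheta^{\prime}=0$ is exactly the small verification the paper leaves to the reader.
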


\begin{corollary}\label{co_02}
Let $(M^{2m}, J, g)$ be a standard K\"{a}hler manifold and $(T^{\ast}M, {}^{BS}\!g)$ its cotangent bundle equipped with the Berger-type deformed Sasaki metric. The curve $C=(\gamma(t),\widetilde{\gamma^{\prime}(t)})$ is a geodesic on $T^{\ast}M$ if and only if $\gamma$ is a geodesic on $(M^{2m}, J, g)$.	
\end{corollary}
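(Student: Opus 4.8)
The plan is to substitute $\vartheta=\widetilde{\gamma^{\prime}}$ into the two geodesic equations \eqref{eq_13} of Theorem \ref{th_03} and to read off $\gamma^{\prime\prime}=0$ with the help of Lemma \ref{lem_01} and the skew-symmetry of $J$. First I would record the effect of the substitution. Writing $\gamma^{\prime\prime}=\nabla_{\gamma^{\prime}}\gamma^{\prime}$ and $\gamma^{\prime\prime\prime}=\nabla_{\gamma^{\prime}}\gamma^{\prime\prime}$, the relation $\widetilde{\widetilde{X}}=X$ from \eqref{eq_00} together with the metric-compatibility identity $\nabla_{\gamma^{\prime}}\widetilde{\gamma^{\prime}}=\widetilde{\nabla_{\gamma^{\prime}}\gamma^{\prime}}$ (the vector analogue of \eqref{eq_02}, valid because $\nabla g=0$) gives $\widetilde{\vartheta}=\gamma^{\prime}$, $\vartheta^{\prime}=\widetilde{\gamma^{\prime\prime}}$, $\widetilde{\vartheta^{\prime}}=\gamma^{\prime\prime}$ and $\vartheta^{\prime\prime}=\widetilde{\gamma^{\prime\prime\prime}}$. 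The easy implication is then immediate: if $\gamma$ is a geodesic, then $\gamma^{\prime\prime}=0$, so $\vartheta^{\prime}=\widetilde{\gamma^{\prime\prime}}=0$ and $C=(\gamma,\widetilde{\gamma^{\prime}})$ is the horizontal lift of the geodesic $\gamma$, hence a geodesic by Corollary \ref{co_01} (equivalently, both lines of \eqref{eq_13} collapse to $0=0$).

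For the converse I would assume $C$ is a geodesic, so \eqref{eq_13} holds with $\vartheta=\widetilde{\gamma^{\prime}}$. Setting $a:=g^{-1}(\vartheta^{\prime},\vartheta J)$ and using \eqref{eq_08} gives $a=g(\widetilde{\vartheta^{\prime}},\widetilde{\vartheta J})=-g(\gamma^{\prime\prime},J\gamma^{\prime})$, and the first equation becomes $\gamma^{\prime\prime}=\mathcal{R}(\gamma^{\prime\prime},\gamma^{\prime})\gamma^{\prime}=R(\gamma^{\prime\prime},\gamma^{\prime})\gamma^{\prime}+\delta^{2}a\,R(\gamma^{\prime},J\gamma^{\prime})\gamma^{\prime}$. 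Taking the $g$-inner product with $\gamma^{\prime}$ and using the antisymmetry $g(R(X,Y)Z,W)=-g(R(X,Y)W,Z)$, both curvature terms vanish since their last two arguments coincide with $\gamma^{\prime}$; this extracts the scalar constraint $b:=g(\gamma^{\prime\prime},\gamma^{\prime})=0$. Inserting $b=0$ into the second equation of \eqref{eq_13} yields $\vartheta^{\prime\prime}=-2\delta^{2}a\,\vartheta^{\prime}J$, and applying the operation $\omega\mapsto\widetilde{\omega}$ together with \eqref{eq_08} turns this into $\gamma^{\prime\prime\prime}=2\delta^{2}a\,J\gamma^{\prime\prime}$.

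Finally I would differentiate the constraint $b\equiv 0$ along $\gamma$. Since $\tfrac{d}{dt}g(\gamma^{\prime\prime},\gamma^{\prime})=g(\gamma^{\prime\prime\prime},\gamma^{\prime})+g(\gamma^{\prime\prime},\gamma^{\prime\prime})$ and, using the previous line with $g(J\gamma^{\prime\prime},\gamma^{\prime})=-g(\gamma^{\prime\prime},J\gamma^{\prime})=a$, one has $g(\gamma^{\prime\prime\prime},\gamma^{\prime})=2\delta^{2}a\,g(J\gamma^{\prime\prime},\gamma^{\prime})=2\delta^{2}a^{2}$, the vanishing of $b$ forces
\[
0=2\delta^{2}a^{2}+|\gamma^{\prime\prime}|^{2}.
\]
As both summands are non-negative, $|\gamma^{\prime\prime}|^{2}=0$, so $\gamma^{\prime\prime}=0$ and $\gamma$ is a geodesic. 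I expect the main obstacle to be conceptual rather than computational: a blind substitution into \eqref{eq_13} does not visibly produce $\gamma^{\prime\prime}=0$, because the horizontal equation only couples $\gamma^{\prime\prime}$ to curvature and the vertical equation is third order. The decisive move is to isolate the single scalar identity $g(\gamma^{\prime\prime},\gamma^{\prime})=0$ from the horizontal equation and then differentiate it, so that the vertical equation feeds in the non-negative term $2\delta^{2}a^{2}$ and collapses the sum to zero.
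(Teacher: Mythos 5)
Your proof is correct, and it takes a genuinely different route from the paper's on the harder half of the equivalence. For the direction ``$\gamma$ geodesic $\Rightarrow C$ geodesic'' you do exactly what the paper does: from $(\ref{eq_00})$ and $(\ref{eq_02})$ one gets $\vartheta^{\prime}=\widetilde{\gamma^{\prime\prime}}$, so $C=(\gamma,\widetilde{\gamma^{\prime}})$ is a horizontal lift precisely when $\gamma$ is a geodesic, and Corollary \ref{co_01} finishes. The paper stops there and declares the full equivalence; but Corollary \ref{co_01} only yields that one implication, and a geodesic $C$ is not known a priori to be a horizontal lift, so the converse ``$C$ geodesic $\Rightarrow \gamma$ geodesic'' is in effect glossed over in the paper's proof. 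Your proposal supplies exactly the missing argument, working directly with the system $(\ref{eq_13})$: pairing the horizontal equation with $\gamma^{\prime}$ annihilates both curvature terms via the skew-symmetry $g(R(X,Y)Z,W)=-g(R(X,Y)W,Z)$, giving the scalar constraint $g(\gamma^{\prime\prime},\gamma^{\prime})=0$; this kills the $\frac{\delta^{2}}{\lambda}$-term in the vertical equation, which under the musical isomorphism and $(\ref{eq_08})$ becomes $\gamma^{\prime\prime\prime}=2\delta^{2}a\,J\gamma^{\prime\prime}$ with $a=g(J\gamma^{\prime\prime},\gamma^{\prime})$; differentiating the constraint then gives $0=g(\gamma^{\prime\prime\prime},\gamma^{\prime})+|\gamma^{\prime\prime}|^{2}=2\delta^{2}a^{2}+|\gamma^{\prime\prime}|^{2}$, a sum of nonnegative terms, whence $\gamma^{\prime\prime}=0$. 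I checked the individual identities you rely on and they are all sound: $\widetilde{\vartheta^{\prime}}=\gamma^{\prime\prime}$ and $\widetilde{\vartheta^{\prime\prime}}=\gamma^{\prime\prime\prime}$ follow from $\nabla g=0$, $\widetilde{\vartheta^{\prime}J}=-J\gamma^{\prime\prime}$ is $(\ref{eq_08})$, and $g^{-1}(\vartheta^{\prime},\vartheta J)=-g(\gamma^{\prime\prime},J\gamma^{\prime})=g(J\gamma^{\prime\prime},\gamma^{\prime})$ uses the Hermitian condition $(\ref{Herm-m})$. In short, the paper's route buys a two-line reduction at the price of proving only one direction, while your route costs a short computation and actually establishes the stated ``if and only if''; it would be a worthwhile addition to the paper's proof.
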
 

\begin{proof} We have, $\gamma^{\prime}(t)\in TM $, then $\vartheta(t)=\widetilde{\gamma^{\prime}(t)}\in T^{\ast}M$. 
From $(\ref{eq_00})$ and $(\ref{eq_02})$, we get $\vartheta^{\prime}=\nabla_{\displaystyle \gamma^{\prime}}\vartheta=\widetilde{\widetilde{\nabla_{\displaystyle \gamma^{\prime}}\vartheta}}=\widetilde{\nabla_{\displaystyle \gamma^{\prime}}\widetilde{\vartheta}}=\widetilde{\nabla_{\displaystyle \gamma^{\prime}}\gamma^{\prime}}=\widetilde{\gamma^{\prime\prime}}$, then $\gamma$ is a geodesic on $M$ equivalent to $C$ is a horizontal lift of the curve $\gamma$ on $M$. Using Corollary \ref{co_01},  we deduce  the result.
\end{proof}

\begin{remark}\label{re_01}
If $\gamma$ is a geodesic on $M$	locally we have:
\begin{eqnarray*}
\gamma^{\prime\prime}=0 &\Leftrightarrow & \gamma^{\prime\prime}_{h}
+\sum_{i,j=1}^{2m}\Gamma_{ij}^{h}(\gamma^{\prime})^{i}(\gamma^{\prime})^{j}=0,\quad  h=\overline{1,2m}.  
\end{eqnarray*}
If $C$ such that $ C(t) =(\gamma(t),\vartheta(t))$ is a horizontal lift of the curve $\gamma$, locally we have: 	
\begin{eqnarray*}
\vartheta^{\prime}=0 &\Leftrightarrow & \vartheta^{\prime}_{h}-\sum_{i,j=1}^{2m}\Gamma_{jh}^{i}\vartheta_{i}(\gamma^{\prime})^{j}=0,\quad  h=\overline{1,2m}.  
\end{eqnarray*}
\end{remark}

\begin{example}
Let $\mathbb{R}^{2}$ be endowed with the structure standard K\"{a}hler $(J, g)$ defined by	
$$g= x^{2}dx^{2}+y^{2}dy^{2}.$$
and
$$ J \partial_{x}= -\frac{x}{y}\partial_{y}\quad, \quad  J \partial_{y}= \frac{y}{x}\partial_{x}.$$
The non-null Christoffel symbols of the Riemannian connection are:
$$\Gamma_{11}^{1}=\dfrac{1}{x},\; \Gamma_{22}^{2}=\dfrac{1}{y}.$$
The geodesics $\gamma$ such that $\gamma(t)=(x(t), y(t))$, $\gamma(0)= (a, b)$ and $\gamma^{\prime}(0)= (\alpha ,\beta)\in\mathbb{R}^{2}$ satisfy the system of equations,
\begin{eqnarray*}
\gamma^{\prime\prime}_{h}+\sum_{i,j=1}^{2}\Gamma_{ij}^{h}(\gamma^{\prime})^{i}(\gamma^{\prime})^{j}=0\Leftrightarrow \left\{\begin{array}{lll}
x^{\prime\prime} + \dfrac{(x^{\prime})^{2}}{x}=0\\\\
y^{\prime\prime} + \dfrac{(y^{\prime})^{2}}{y}=0
\end{array} \right.\Leftrightarrow \left\{\begin{array}{lll}
x(t)=\sqrt{2a\alpha  t+a^{2}}\\\\
y(t)=\sqrt{2b\beta t+b^{2}}
\end{array} \right.
\end{eqnarray*}
Hence $\gamma^{\prime}(t)= \dfrac{a\alpha }{\sqrt{2a\alpha  t+a^{2}}} \partial_{x}+\dfrac{b\beta}{\sqrt{2b\beta t+a^{2}}}\partial_{y}$,  $\gamma(t)= (\sqrt{2a\alpha  t+a^{2}},\sqrt{2b\beta t+b^{2}}).$

$1)$ Let $C_{1}=(\gamma(t),\vartheta(t))$ be a horizontal lift of the geodesic $\gamma$ then, 
$$\vartheta^{\prime}_{h}-\sum_{i,j=1}^{2}\Gamma_{jh}^{i}\vartheta_{i}(\gamma^{\prime})^{j}=0 \Leftrightarrow \left\{\begin{array}{lll}
\vartheta'_{1} - \dfrac{x^{\prime}}{x}\vartheta_{1}=0\\\\
\vartheta'_{2} - \dfrac{y^{\prime}}{y}\vartheta_{2}=0
\end{array} \right.\Leftrightarrow \left\{\begin{array}{lll}
\vartheta_{1}(t)=k_{1}\sqrt{2a\alpha  t+a^{2}}\\\\
\vartheta_{2}(t)=k_{2}\sqrt{2b\beta t+b^{2}}
\end{array} \right.$$
Hence $\vartheta(t)=k_{1}\sqrt{2a\alpha  t+a^{2}}dx+k_{2}\sqrt{2b\beta t+b^{2}}dy$, where  $k_{1}, k_{2}\in \mathbb{R}.$
From Corollary $\ref{co_01}$, the curve $C_{1}$ is a geodesic on $T^{\ast}\mathbb{R}^{2}$.

$2)$ Let $C_{2}=(\gamma(t),\widetilde{\gamma^{\prime}(t)})$ be a curve on $T^{\ast}\mathbb{R}^{2}$, from $(\ref{tilde_X})$, we have 
\[
    \widetilde{\gamma^{\prime}(t)}
    =\displaystyle\sum_{i,j=1}^{2} g_{ij}(\gamma^{\prime})^{j}(t)dx_{i}
    =a\alpha\sqrt{2a\alpha  t+a^{2}}dx+b\beta\sqrt{2b\beta t+b^{2}}dy.
\]
From Corollary $\ref{co_02}$, the curve $C_{2}$ is a geodesic on $T^{\ast}\mathbb{R}^{2}$.
\end{example}

\begin{corollary}\label{co_03}
Let $(M^{2m}, J, g)$ be a flat standard K\"{a}hler manifold and $(T^{\ast}M, {}^{BS}\!g)$ its cotangent bundle equipped with the Berger-type deformed Sasaki metric. Then the curve $C= (\gamma(t), \vartheta(t))$ is a geodesic on $T^{\ast}M$ if and only if $\gamma$ is a geodesic on $(M^{2m}, J, g)$ and
\begin{eqnarray*}
\vartheta^{\prime\prime}=2\delta^{2}g^{-1}(\vartheta^{\prime},\vartheta J)(\dfrac{\delta^{2}}{\lambda}g^{-1}(\vartheta^{\prime},\vartheta)\vartheta J-\vartheta^{\prime} J).
\end{eqnarray*}		
\end{corollary}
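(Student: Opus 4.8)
The plan is to obtain this corollary as a direct specialization of Theorem $\ref{th_03}$ to the flat case, so the only substantive step is to simplify the horizontal equation of the system $(\ref{eq_13})$ under the hypothesis of flatness. Theorem $\ref{th_03}$ already asserts that $C=(\gamma(t),\vartheta(t))$ is a geodesic on $(T^{\ast}M,{}^{BS}\!g)$ if and only if the coupled system $(\ref{eq_13})$ holds, so I would begin by simply recording that system and then examining how each of its two equations responds to the assumption $R=0$.

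First I would recall the deformed curvature operator appearing in Theorem $\ref{th_03}$, namely $\mathcal{R}(\widetilde{\vartheta^{\prime}},\widetilde{\vartheta})=R(\widetilde{\vartheta^{\prime}},\widetilde{\vartheta})+\delta^{2}g^{-1}(\vartheta^{\prime},\vartheta J)R(\widetilde{\vartheta},J\widetilde{\vartheta})$. Since $(M^{2m},J,g)$ is flat, its Riemannian curvature tensor $R$ vanishes identically; both summands of $\mathcal{R}$ therefore vanish and $\mathcal{R}(\widetilde{\vartheta^{\prime}},\widetilde{\vartheta})=0$. The first equation of $(\ref{eq_13})$ consequently reduces to $\gamma^{\prime\prime}=0$, which is exactly the condition that $\gamma$ be a geodesic on $(M^{2m},J,g)$. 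The second equation of $(\ref{eq_13})$ carries no curvature factor, so it is untouched by the flatness hypothesis and survives verbatim as the stated second-order condition on $\vartheta$. Combining the two observations shows that $(\ref{eq_13})$ is equivalent to the conjunction of $\gamma^{\prime\prime}=0$ with the displayed equation for $\vartheta^{\prime\prime}$, which is precisely the claimed characterization.

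There is no genuine obstacle in this argument: the whole content is the remark that flatness forces $\mathcal{R}=0$ and thereby decouples the horizontal equation from the curvature, collapsing it to the geodesic equation on $M$. The only point worth emphasizing, so as not to overstate the conclusion, is that the vertical equation remains nontrivial --- it still constrains $\vartheta$ through the almost complex structure $J$ --- so flatness does not render \emph{``$C$ a geodesic''} equivalent to \emph{``$\gamma$ a geodesic''} alone; the covector component must independently satisfy its own second-order equation.
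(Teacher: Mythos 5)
Your proposal is correct and follows exactly the route the paper intends: the paper states Corollary \ref{co_03} without proof as an immediate specialization of Theorem \ref{th_03}, and your observation that $R=0$ annihilates both summands of $\mathcal{R}(\widetilde{\vartheta^{\prime}},\widetilde{\vartheta})$, collapsing the first equation of $(\ref{eq_13})$ to $\gamma^{\prime\prime}=0$ while leaving the curvature-free second equation intact, is precisely that argument. Your closing remark that the vertical equation remains a nontrivial constraint on $\vartheta$ is accurate and appropriately cautious.
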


Let $C$ be a curve on $T^{\ast}M$, the cure $\gamma=\pi\circ C$ is called the projection (projected curve) of the curve $C$ on $M$. 

\begin{theorem}\label{th_04}
Let $(M^{2m}, \varphi, g)$ be a standard K\"{a}hler locally symmetric manifold, $(T^{\ast}M, ^{BS}\!\!g)$ be its  cotangent bundle equipped with the Berger-type deformed Sasaki metric, and $C$  be a geodesic on $T^{\ast}M$. Then $\mathcal{R}(\widetilde{\vartheta^{\prime}},\widetilde{\vartheta})$ is parallel along the projected curve $\gamma=\pi\circ C$.
\end{theorem}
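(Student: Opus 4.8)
The plan is to show that the $(1,1)$-tensor $\mathcal{R}(\widetilde{\vartheta^{\prime}},\widetilde{\vartheta})$, regarded as a field along $\gamma$, is parallel, i.e. that $\nabla_{\gamma^{\prime}}\mathcal{R}(\widetilde{\vartheta^{\prime}},\widetilde{\vartheta})=0$. I would compute this covariant derivative directly, exploiting the two structural hypotheses: local symmetry gives $\nabla R=0$, and the standard K\"{a}hler condition gives $\nabla J=0$. The auxiliary facts used repeatedly are $\nabla_{\gamma^{\prime}}\widetilde{\vartheta}=\widetilde{\nabla_{\gamma^{\prime}}\vartheta}=\widetilde{\vartheta^{\prime}}$ and $\nabla_{\gamma^{\prime}}\widetilde{\vartheta^{\prime}}=\widetilde{\vartheta^{\prime\prime}}$, both from $(\ref{eq_02})$, together with $(\ref{eq_08})$, $\widetilde{\omega J}=-J\widetilde{\omega}$.

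First I would differentiate the two summands of $\mathcal{R}(\widetilde{\vartheta^{\prime}},\widetilde{\vartheta})=R(\widetilde{\vartheta^{\prime}},\widetilde{\vartheta})+\delta^{2}g^{-1}(\vartheta^{\prime},\vartheta J)R(\widetilde{\vartheta},J\widetilde{\vartheta})$ separately. For the first, since $\nabla R=0$, the Leibniz rule gives $\nabla_{\gamma^{\prime}}\big(R(\widetilde{\vartheta^{\prime}},\widetilde{\vartheta})\big)=R(\widetilde{\vartheta^{\prime\prime}},\widetilde{\vartheta})+R(\widetilde{\vartheta^{\prime}},\widetilde{\vartheta^{\prime}})$, and the last term vanishes by antisymmetry of $R$. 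For the second summand I would apply the product rule: differentiating the scalar coefficient $\delta^{2}g^{-1}(\vartheta^{\prime},\vartheta J)$ uses $(\ref{eq_06})$ and the geodesic equation for $\vartheta^{\prime\prime}$, while differentiating the endomorphism yields $\nabla_{\gamma^{\prime}}\big(R(\widetilde{\vartheta},J\widetilde{\vartheta})\big)=R(\widetilde{\vartheta^{\prime}},J\widetilde{\vartheta})+R(\widetilde{\vartheta},J\widetilde{\vartheta^{\prime}})$, which the K\"{a}hler curvature identities $(\ref{eq_07})$ collapse to $2R(\widetilde{\vartheta^{\prime}},J\widetilde{\vartheta})$.

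Next I would feed in the second geodesic equation from $(\ref{eq_13})$, namely $\vartheta^{\prime\prime}=2\delta^{2}g^{-1}(\vartheta^{\prime},\vartheta J)\big(\tfrac{\delta^{2}}{\lambda}g^{-1}(\vartheta^{\prime},\vartheta)\vartheta J-\vartheta^{\prime}J\big)$. Applying the operation $\omega\mapsto\widetilde{\omega}$ and $(\ref{eq_08})$ turns $\widetilde{\vartheta^{\prime\prime}}$ into an expression in $J\widetilde{\vartheta}$ and $J\widetilde{\vartheta^{\prime}}$, so that $R(\widetilde{\vartheta^{\prime\prime}},\widetilde{\vartheta})$ becomes a combination of $R(J\widetilde{\vartheta},\widetilde{\vartheta})$ and $R(J\widetilde{\vartheta^{\prime}},\widetilde{\vartheta})$, which $(\ref{eq_07})$ rewrites in terms of $R(\widetilde{\vartheta},J\widetilde{\vartheta})$ and $R(\widetilde{\vartheta^{\prime}},J\widetilde{\vartheta})$. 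Likewise, the derivative of the scalar coefficient produces $g^{-1}(\vartheta^{\prime\prime},\vartheta J)$, which, after the Hermitian identities $(\ref{Herm-m1})$ and the algebraic simplification $\tfrac{\delta^{2}r^{2}}{\lambda}-1=-\tfrac{1}{\lambda}$ (valid since $\lambda=1+\delta^{2}r^{2}$ with $r^{2}=g^{-1}(\vartheta,\vartheta)$ along the curve), reduces to $-\tfrac{2\delta^{2}}{\lambda}g^{-1}(\vartheta^{\prime},\vartheta J)g^{-1}(\vartheta^{\prime},\vartheta)$. Assembling all four resulting contributions, the two terms proportional to $R(\widetilde{\vartheta},J\widetilde{\vartheta})$ cancel and the two proportional to $R(\widetilde{\vartheta^{\prime}},J\widetilde{\vartheta})$ cancel, leaving $\nabla_{\gamma^{\prime}}\mathcal{R}(\widetilde{\vartheta^{\prime}},\widetilde{\vartheta})=0$.

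The computation is elementary once the bookkeeping is set up; the main obstacle is purely a matter of sign discipline. Getting the cancellation to work hinges on correctly tracking the sign in $(\ref{eq_08})$ through $\widetilde{\vartheta^{\prime\prime}}$ and on applying the K\"{a}hler identity $R(JY,Z)=-R(Y,JZ)$ from $(\ref{eq_07})$ consistently. Note that only the second geodesic equation is needed: the first, $\gamma^{\prime\prime}=\mathcal{R}(\widetilde{\vartheta^{\prime}},\widetilde{\vartheta})\gamma^{\prime}$, plays no role, since $\mathcal{R}(\widetilde{\vartheta^{\prime}},\widetilde{\vartheta})$ is built from $\vartheta,\vartheta^{\prime}$ and $J$ alone.
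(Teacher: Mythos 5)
Your proposal is correct and follows essentially the same route as the paper's proof: both differentiate $\mathcal{R}(\widetilde{\vartheta^{\prime}},\widetilde{\vartheta})$ directly, discard $R^{\prime}$ by local symmetry and $R(\widetilde{\vartheta^{\prime}},\widetilde{\vartheta^{\prime}})$, $g^{-1}(\vartheta^{\prime},\vartheta^{\prime}J)$ by antisymmetry, substitute only the second geodesic equation of $(\ref{eq_13})$ via $(\ref{eq_08})$ and $(\ref{eq_07})$, and cancel using $\lambda=1+\delta^{2}r^{2}$. The only difference is bookkeeping: the paper collects everything into multiples of $R(\widetilde{\vartheta},J\widetilde{\vartheta})$ with total coefficient $\frac{2\delta^{4}}{\lambda}+\frac{2\delta^{4}(\lambda-1)}{\lambda}-2\delta^{4}=0$, whereas you simplify the scalar $g^{-1}(\vartheta^{\prime\prime},\vartheta J)$ first via $\frac{\delta^{2}r^{2}}{\lambda}-1=-\frac{1}{\lambda}$ and cancel the four contributions in two pairs.
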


\begin{proof} Using $(\ref{eq_03})$, $(\ref{eq_06})$ and $(\ref{eq_07})$ we have 
\begin{eqnarray*}
(\mathcal{R}(\widetilde{\vartheta^{\prime}},\widetilde{\vartheta}))^{\prime}&=&(R(\widetilde{\vartheta^{\prime}},\widetilde{\vartheta}))^{\prime}+\delta^{2}(g^{-1}(\vartheta^{\prime},\vartheta J))^{\prime} R(\widetilde{\vartheta},J\widetilde{\vartheta})+\delta^{2}g^{-1}(\vartheta^{\prime},\vartheta J) (R(\widetilde{\vartheta},J\widetilde{\vartheta}))^{\prime}\\
&=&R^{\prime}(\widetilde{\vartheta^{\prime}},\widetilde{\vartheta})+R(\widetilde{\vartheta^{\prime\prime}},\widetilde{\vartheta})+R(\widetilde{\vartheta^{\prime}},\widetilde{\vartheta^{\prime}})+\delta^{2}g^{-1}(\vartheta^{\prime\prime},\vartheta J) R(\widetilde{\vartheta},J\widetilde{\vartheta})\\
&&+\delta^{2}g^{-1}(\vartheta^{\prime},\vartheta^{\prime} J) R(\widetilde{\vartheta},J\widetilde{\vartheta})+\delta^{2}g^{-1}(\vartheta^{\prime},\vartheta J) R^{\prime}(\widetilde{\vartheta},J\widetilde{\vartheta})\\
&&+\delta^{2}g^{-1}(\vartheta^{\prime},\vartheta J) R(\widetilde{\vartheta^{\prime}},J\widetilde{\vartheta})+\delta^{2}g^{-1}(\vartheta^{\prime},\vartheta J) R(\widetilde{\vartheta},J\widetilde{\vartheta^{\prime}})\\
&=&R(\widetilde{\vartheta^{\prime\prime}},\widetilde{\vartheta})+\delta^{2}g^{-1}(\vartheta^{\prime\prime},\vartheta J) R(\widetilde{\vartheta},J\widetilde{\vartheta})+\delta^{2}g^{-1}(\vartheta^{\prime},\vartheta J) R(\widetilde{\vartheta^{\prime}},J\widetilde{\vartheta})\\
&&+\delta^{2}g^{-1}(\vartheta^{\prime},\vartheta J) R(\widetilde{\vartheta},J\widetilde{\vartheta^{\prime}}),
\end{eqnarray*}
from second equation of $(\ref{eq_13})$ and $(\ref{eq_08})$ we get
\begin{eqnarray*}
(\mathcal{R}(\widetilde{\vartheta^{\prime}},\widetilde{\vartheta}))^{\prime}&=&\dfrac{2\delta^{4}}{\lambda}g^{-1}(\vartheta^{\prime},\vartheta J)g^{-1}(\vartheta^{\prime},\vartheta)R(\widetilde{\vartheta J},\widetilde{\vartheta})-2\delta^{2}g^{-1}(\vartheta^{\prime},\vartheta J) R(\widetilde{\vartheta^{\prime}J},\widetilde{\vartheta})\\
&&+\dfrac{2\delta^{6}}{\lambda}g^{-1}(\vartheta^{\prime},\vartheta J)g^{-1}(\vartheta^{\prime},\vartheta)g^{-1}(\vartheta J,\vartheta J)R(\widetilde{\vartheta },J\widetilde{\vartheta})\\
&&-2\delta^{4}g^{-1}(\vartheta^{\prime},\vartheta J)g^{-1}(\vartheta^{\prime} J,\vartheta J)R(\widetilde{\vartheta },J\widetilde{\vartheta})+2\delta^{2}g^{-1}(\vartheta^{\prime},\vartheta J) R(\widetilde{\vartheta^{\prime}},J\widetilde{\vartheta})\\
&=&\dfrac{2\delta^{4}}{\lambda}g^{-1}(\vartheta^{\prime},\vartheta J)g^{-1}(\vartheta^{\prime},\vartheta)R(\widetilde{\vartheta },J\widetilde{\vartheta})\\
&&+\dfrac{2\delta^{4}(\lambda-1)}{\lambda}g^{-1}(\vartheta^{\prime},\vartheta J)g^{-1}(\vartheta^{\prime},\vartheta)R(\widetilde{\vartheta },J\widetilde{\vartheta})\\
&&-2\delta^{4}g^{-1}(\vartheta^{\prime},\vartheta J)g^{-1}(\vartheta^{\prime},\vartheta )R(\widetilde{\vartheta },J\widetilde{\vartheta})\\
&=&(\dfrac{2\delta^{4}}{\lambda}+\dfrac{2\delta^{4}(\lambda-1)}{\lambda}-2\delta^{4})g^{-1}(\vartheta^{\prime},\vartheta J)g^{-1}(\vartheta^{\prime},\vartheta )R(\widetilde{\vartheta },J\widetilde{\vartheta})\\
&=&0.
\end{eqnarray*}	
\end{proof}

We now study the geodesics  on the unit cotangent bundle with respect to the Berger-type deformed Sasaki metric.

\begin{lemma}\label{lem_06}
Let $(M^{2m}, \varphi, g)$ be a standard K\"{a}hler manifold, $(T^{\ast}_{1}M,{}^{BS}\!\hat{g})$ its unit cotangent bundle equipped with the Berger-type deformed Sasaki metric and $C=(\gamma(t),\vartheta(t))$  be a curve on $T^{\ast}_{1}M$. Then we have
\begin{equation}\label{eq_14}
C^{\prime} = {}^{H}\!\gamma^{\prime} + {}^{T}\!\vartheta^{\prime}.
\end{equation}
\end{lemma}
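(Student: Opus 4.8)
The plan is to express the velocity vector $C^{\prime}$ of a curve lying on $T^{\ast}_{1}M$ in terms of the tangential frame $\{{}^{H}\!X, {}^{T}\!\omega\}$ adapted to the unit cotangent bundle, starting from the ambient formula already available on $T^{\ast}M$. Concretely, since $C=(\gamma(t),\vartheta(t))$ is in particular a curve on $T^{\ast}M$, Lemma~\ref{lem_05} (equivalently, formula~(\ref{eq_12})) gives immediately
\begin{equation*}
C^{\prime}={}^{H}\!\gamma^{\prime}+{}^{V}\!\vartheta^{\prime},
\end{equation*}
where $\gamma^{\prime}=\frac{d\gamma}{dt}$ and $\vartheta^{\prime}=\nabla_{\gamma^{\prime}}\vartheta$. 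The task is then purely to rewrite the vertical part ${}^{V}\!\vartheta^{\prime}$ using the decomposition~(\ref{eq_10}) of a vertical lift into its tangential lift plus its normal component along $\mathcal{N}={}^{V}\!p$.

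First I would invoke the defining relation of the tangential lift, namely
\begin{equation*}
{}^{V}\!\vartheta^{\prime}={}^{T}\!\vartheta^{\prime}+g^{-1}(\vartheta^{\prime},p)\,{}^{V}\!p,
\end{equation*}
which is exactly~(\ref{eq_11}) applied to the covector $\vartheta^{\prime}$ (here $p=\vartheta(t)$ along the curve). Substituting this into $C^{\prime}={}^{H}\!\gamma^{\prime}+{}^{V}\!\vartheta^{\prime}$ yields $C^{\prime}={}^{H}\!\gamma^{\prime}+{}^{T}\!\vartheta^{\prime}+g^{-1}(\vartheta^{\prime},\vartheta)\,{}^{V}\!p$. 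To reach~(\ref{eq_14}) it therefore suffices to show that the normal coefficient $g^{-1}(\vartheta^{\prime},\vartheta)$ vanishes identically along the curve.

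The key step — and the only genuine content of the lemma — is this vanishing, and it comes from the constraint that $C$ lies on $T^{\ast}_{1}M$. By~(\ref{eq_09}) the curve satisfies $g^{-1}(\vartheta(t),\vartheta(t))=1$ for all $t$. Differentiating this constant along $\gamma$ and using the compatibility formula~(\ref{eq_03}), namely $\gamma^{\prime}g^{-1}(\vartheta,\vartheta)=2\,g^{-1}(\nabla_{\gamma^{\prime}}\vartheta,\vartheta)=2\,g^{-1}(\vartheta^{\prime},\vartheta)$, gives
\begin{equation*}
0=\frac{d}{dt}\,g^{-1}(\vartheta,\vartheta)=2\,g^{-1}(\vartheta^{\prime},\vartheta),
\end{equation*}
so that $g^{-1}(\vartheta^{\prime},\vartheta)=0$. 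Feeding this back into the expression for $C^{\prime}$ kills the $\,{}^{V}\!p$ term and produces exactly $C^{\prime}={}^{H}\!\gamma^{\prime}+{}^{T}\!\vartheta^{\prime}$, as claimed. I do not anticipate any real obstacle here: the argument is a one-line differentiation of the unit-norm constraint combined with the already-established metric compatibility~(\ref{eq_03}); the only point requiring mild care is bookkeeping the identification $p=\vartheta(t)$ when specializing the general decomposition~(\ref{eq_11}) to the moving covector along the curve.
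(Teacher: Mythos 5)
Your proof is correct and follows essentially the same route as the paper: both start from the ambient decomposition $C^{\prime}={}^{H}\!\gamma^{\prime}+{}^{V}\!\vartheta^{\prime}$ of Lemma~\ref{lem_05}, rewrite the vertical part via the tangential lift as ${}^{T}\!\vartheta^{\prime}+g^{-1}(\vartheta^{\prime},\vartheta)\,{}^{V}\!\vartheta$, and kill the normal term by differentiating the unit-norm constraint $g^{-1}(\vartheta,\vartheta)=1$ to get $g^{-1}(\vartheta^{\prime},\vartheta)=0$. Your explicit appeal to the compatibility formula~(\ref{eq_03}) merely makes transparent a differentiation the paper performs without comment.
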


\begin{proof} Using $(\ref{eq_12})$, we have 
\begin{eqnarray*}
C^{\prime} &=& {}^{H}\!\gamma^{\prime} + {}^{V}\!\vartheta^{\prime}={}^{H}\!\gamma^{\prime} + {}^{T}\!\vartheta^{\prime}+g^{-1}(\vartheta^{\prime},\vartheta){}^{V}\!\vartheta.
\end{eqnarray*}	 
Since $C(t) =(\gamma(t),\vartheta(t))\in T^{\ast}_{1}M$ then $g^{-1}(\vartheta,\vartheta)=1$, on the other hand
\begin{eqnarray*}
0&=&(g^{-1}(\vartheta,\vartheta))^{\prime}=2g^{-1}(\vartheta^{\prime},\vartheta),
\end{eqnarray*}
hence
\begin{eqnarray}\label{eq_15}
\quad g^{-1}(\vartheta^{\prime},\vartheta)=0.
\end{eqnarray}
The proof of the lemma is completed.
\end{proof}
Subsequently, let $t$ be an arc length parameter on $C$, From \ref{eq_14}, we have
\begin{eqnarray}\label{eq_16}
1=|\gamma^{\prime}|^{2}+ |\vartheta^{\prime}|^{2}+\delta^{2}g^{-1}(\vartheta^{\prime},\vartheta J)^{2}.
\end{eqnarray}

\begin{theorem}\label{th_05}
Let $(M^{2m}, \varphi, g)$ be a standard K\"{a}hler manifold, $(T^{\ast}_{1}M,{}^{BS}\!\hat{g})$ its unit cotangent bundle equipped with the Berger-type deformed Sasaki metric and $C=(\gamma(t),\vartheta(t))$  be a curve on $T^{\ast}_{1}M$. Let $\kappa=|\vartheta^{\prime}|$ and $\mu=g^{-1}(\vartheta^{\prime},\vartheta J)$. Then $C$ is a geodesic on $T^{\ast}_{1}M$ if and only if
\begin{eqnarray}\label{eq_17}
\left\{
\begin{array}{lll}
\gamma^{\prime\prime}=\mathcal{R}(\widetilde{\vartheta^{\prime}},\widetilde{\vartheta})\gamma^{\prime}\\
\vartheta^{\prime\prime}=-2\delta^{2}\mu\,\vartheta^{\prime}J,
\end{array}
\right.
\end{eqnarray}
where $\mathcal{R}(\widetilde{\vartheta^{\prime}},\widetilde{\vartheta})=R(\widetilde{\vartheta^{\prime}},\widetilde{\vartheta})+\delta^{2}\mu R(\widetilde{\vartheta},J\widetilde{\vartheta})$. Moreover, 
\begin{equation}\label{eq_18}
\left\{
\begin{array}{lll}
|\vartheta^{\prime}|=\kappa\\
|\gamma^{\prime}|=\sqrt{1-K}
\end{array}
\right.
\end{equation}
where $K =\kappa^{2}+\delta^{2}\mu^{2} =const$, $0 \leq K \leq 1$, $\kappa=const$ and $\mu=const$.
\end{theorem}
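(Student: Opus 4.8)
The plan is to mirror the proof of Theorem~\ref{th_03}, replacing the vertical lift by the tangential lift and exploiting the constraint $g^{-1}(\vartheta',\vartheta)=0$. By Lemma~\ref{lem_06} the velocity of $C$ is $C'={}^{H}\!\gamma'+{}^{T}\!\vartheta'$, so $C$ is a geodesic of $(T^{\ast}_{1}M,{}^{BS}\!\hat{g})$ precisely when ${}^{BS}\widehat{\nabla}_{C'}C'=0$. First I would expand
\[
{}^{BS}\widehat{\nabla}_{C'}C'={}^{BS}\widehat{\nabla}_{{}^{H}\!\gamma'}{}^{H}\!\gamma'+{}^{BS}\widehat{\nabla}_{{}^{H}\!\gamma'}{}^{T}\!\vartheta'+{}^{BS}\widehat{\nabla}_{{}^{T}\!\vartheta'}{}^{H}\!\gamma'+{}^{BS}\widehat{\nabla}_{{}^{T}\!\vartheta'}{}^{T}\!\vartheta',
\]
and evaluate each term with the four formulas of Theorem~\ref{th_02}, setting $X=Y=\gamma'$, $\omega=\theta=\vartheta'$ and $p=\vartheta$. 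The diagonal terms supply ${}^{H}\!(\nabla_{\gamma'}\gamma')={}^{H}\!\gamma''$ (the curvature correction drops since $R(\gamma',\gamma')=0$) and ${}^{T}\!(\nabla_{\gamma'}\vartheta')={}^{T}\!\vartheta''$.

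The decisive simplification is that, by $(\ref{eq_15})$, $g^{-1}(\vartheta',\vartheta)=0$ along $C$. This annihilates both the $-g^{-1}(\theta,p){}^{T}\!\omega$ term and the whole last bracket in the fourth formula of Theorem~\ref{th_02}, leaving only ${}^{BS}\widehat{\nabla}_{{}^{T}\!\vartheta'}{}^{T}\!\vartheta'=2\delta^{2}\mu\,{}^{T}\!(\vartheta'J)$ with $\mu=g^{-1}(\vartheta',\vartheta J)$. The two mixed terms add up to ${}^{H}\!(R(\widetilde\vartheta,\widetilde{\vartheta'})\gamma')-\delta^{2}\mu\,{}^{H}\!(R(\widetilde\vartheta,J\widetilde\vartheta)\gamma')$, which by the skew-symmetry $R(\widetilde\vartheta,\widetilde{\vartheta'})=-R(\widetilde{\vartheta'},\widetilde\vartheta)$ becomes $-{}^{H}\!(\mathcal{R}(\widetilde{\vartheta'},\widetilde\vartheta)\gamma')$ with $\mathcal{R}$ as defined in the statement. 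Collecting horizontal and tangential parts and setting the sum to zero yields exactly the system~$(\ref{eq_17})$.

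For the ``Moreover'' part I would show $\mu$ and $\kappa=|\vartheta'|$ are constant directly from the second equation of $(\ref{eq_17})$. Differentiating with $(\ref{eq_03})$ and using the K\"{a}hler identity $(\ref{eq_06})$ (so that $\nabla_{\gamma'}(\vartheta J)=\vartheta'J$) gives $\mu'=g^{-1}(\vartheta'',\vartheta J)+g^{-1}(\vartheta',\vartheta'J)$; the second summand vanishes by skew-symmetry of $J$, and substituting $\vartheta''=-2\delta^{2}\mu\,\vartheta'J$ together with the Hermitian identity $(\ref{Herm-m1})$ reduces the first to a multiple of $g^{-1}(\vartheta',\vartheta)=0$, so $\mu'=0$. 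Likewise $(\kappa^{2})'=2g^{-1}(\vartheta'',\vartheta')=-4\delta^{2}\mu\,g^{-1}(\vartheta'J,\vartheta')=0$ by the same Hermitian identities. Hence $K=\kappa^{2}+\delta^{2}\mu^{2}$ is constant, and substituting $|\vartheta'|^{2}=\kappa^{2}$ and $g^{-1}(\vartheta',\vartheta J)^{2}=\mu^{2}$ into the unit-speed relation $(\ref{eq_16})$ gives $|\gamma'|^{2}=1-K$; nonnegativity of $|\gamma'|^{2}$ and of $K$ forces $0\le K\le 1$.

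The main obstacle is the bookkeeping of the second paragraph: verifying that after imposing $g^{-1}(\vartheta',\vartheta)=0$ the fourth connection formula collapses to the single term $2\delta^{2}\mu\,{}^{T}\!(\vartheta'J)$, and that the curvature contributions recombine with the correct sign into $\mathcal{R}$. Conceptually the key point is the constancy of $\mu$ and $\kappa$, and this is where the standard K\"{a}hler hypothesis is genuinely used, through $(\ref{eq_06})$ and $(\ref{Herm-m1})$, to kill the terms $g^{-1}(\vartheta',\vartheta'J)$ and $g^{-1}(\vartheta'J,\vartheta J)$.
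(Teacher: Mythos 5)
Your proposal is correct and essentially reproduces the paper's own proof: the same expansion of ${}^{BS}\widehat{\nabla}_{C'}C'$ via Lemma \ref{lem_06} and the four formulas of Theorem \ref{th_02}, the same use of $(\ref{eq_15})$ to annihilate the extra terms in ${}^{BS}\widehat{\nabla}_{{}^{T}\!\vartheta'}{}^{T}\!\vartheta'$, the same recombination of the two mixed curvature terms into $-{}^{H}\!(\mathcal{R}(\widetilde{\vartheta'},\widetilde{\vartheta})\gamma')$, and the same constancy arguments for $\kappa$ and $\mu$ via $(\ref{Herm-m1})$ and $(\ref{eq_16})$. The only point worth noting is one you share with the paper: both of you pass from the vanishing of the tangential part ${}^{T}\!\big(\vartheta''+2\delta^{2}\mu\,\vartheta'J\big)$ to the equation $\vartheta''=-2\delta^{2}\mu\,\vartheta'J$, whereas a vanishing tangential lift only forces $\vartheta''+2\delta^{2}\mu\,\vartheta'J$ to be proportional to $\vartheta$ (indeed $g^{-1}(\vartheta'',\vartheta)=-\kappa^{2}$ along any curve in $T^{\ast}_{1}M$), so strictly speaking the second equation of $(\ref{eq_17})$ should carry an additional component along $\vartheta$.
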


\begin{proof} Using formula $(\ref{eq_14})$ and Theorem \ref{th_02}, we compute the derivative $\widehat{\nabla}_{C^{\prime}}C^{\prime}$. 
\begin{eqnarray*}
\widehat{\nabla}_{C^{\prime}}C^{\prime} &=&\widehat{\nabla}_{\displaystyle({}^{H}\!\gamma^{\prime} + {}^{T}\!\vartheta^{\prime})}({}^{H}\!\gamma^{\prime} + {}^{T}\!\vartheta^{\prime}) \\
&=&\widehat{\nabla}_{\displaystyle{}^{H}\gamma^{\prime}}{}^{H}\gamma^{\prime} +\widehat{\nabla}_{\displaystyle{}^{H}\!\gamma^{\prime}}{}^{T}\!\vartheta^{\prime}+\widehat{\nabla}_{{}^{T}\!\vartheta^{\prime}}{}^{H}\!\gamma^{\prime}+\widehat{\nabla}_{{}^{T}\!\vartheta^{\prime}}{}^{T}\!\vartheta^{\prime} \\
&=&{}^{H}\!\gamma^{\prime\prime}+{}^{T}\!\vartheta^{\prime\prime}+{}^{H}\!(R(\widetilde{\vartheta},\widetilde{\vartheta^{\prime}})\gamma^{\prime}-\delta^{2}g^{-1}(\vartheta^{\prime},\vartheta J)R(\widetilde{\vartheta},J\widetilde{\vartheta})
\gamma^{\prime}))\\
&&+2\delta^{2}g^{-1}(\vartheta^{\prime},\vartheta J){}^{T}\!(\vartheta^{\prime}J)\\
&=&{}^{H}\!\gamma^{\prime\prime}-{}^{H}\!(R(\widetilde{\vartheta^{\prime}},\widetilde{\vartheta})\gamma^{\prime}+\delta^{2}g^{-1}(\vartheta^{\prime},\vartheta J)R(\widetilde{\vartheta},J\widetilde{\vartheta})
\gamma^{\prime}))\\
&&+{}^{T}\!\vartheta^{\prime\prime}+2\delta^{2}g^{-1}(\vartheta^{\prime},\vartheta J){}^{T}\!(\vartheta^{\prime}J)\\
&=&{}^{H}\!\big(\gamma^{\prime\prime}-\mathcal{R}(\widetilde{\vartheta^{\prime}},\widetilde{\vartheta})\gamma^{\prime}\big)+{}^{T}\!\big(\vartheta^{\prime\prime}+2\delta^{2}\mu\,\vartheta^{\prime}J\big).
\end{eqnarray*}
If we put $\widehat{\nabla}_{C^{\prime}}C^{\prime}$ equal to zero, we find $(\ref{eq_17})$.
Moreover, we have $\kappa=|\vartheta^{\prime}|$, then \[(\kappa^{2})^{\prime}=2g^{-1}(\vartheta^{\prime\prime},\vartheta^{\prime}),\] from second equation of $(\ref{eq_17})$, we have 
$g^{-1}(\vartheta^{\prime\prime},\vartheta^{\prime})+2\delta^{2}\mu g^{-1}(\vartheta^{\prime}J,\vartheta^{\prime})=0$, on the other hand, from $(\ref{Herm-m1})$, we find $g^{-1}(\vartheta^{\prime}J,\vartheta^{\prime})=0$, then  $g^{-1}(\vartheta^{\prime\prime},\vartheta^{\prime})=0$, hence $\kappa= const$.
We have, $\mu=g^{-1}(\vartheta^{\prime},\vartheta J)$, then $\mu^{\prime}=g^{-1}(\vartheta^{\prime\prime},\vartheta J)+g^{-1}(\vartheta^{\prime},\vartheta^{\prime} J)=g^{-1}(\vartheta^{\prime\prime},\vartheta J)$, from second equation of $(\ref{eq_17})$, we have $\mu^{\prime}=g^{-1}(\vartheta^{\prime\prime},\vartheta J)=2\delta^{2}\mu g^{-1}(\vartheta^{\prime}J,\vartheta J)$, from $(\ref{Herm-m1})$, we find \[g^{-1}(\vartheta^{\prime}J,\vartheta J)=g^{-1}(\vartheta^{\prime},\vartheta )=0, \] hence $\mu= const$.
Using $(\ref{eq_16})$, we get $	1=|\gamma^{\prime}|^{2}+ \kappa^{2}+\delta^{2}\mu^{2}$, then
\begin{eqnarray*}
|\gamma^{\prime}|=\sqrt{1-(\kappa^{2}+\delta^{2}\mu^{2})}=\sqrt{1-K}.
\end{eqnarray*}
where, $K =\kappa^{2}+\delta^{2}\mu^{2} =const.$
\end{proof}

\begin{theorem}\label{th_06}
Let $(M^{2m}, J, g)$ denote a standard Kähler locally symmetric manifold, $(T^{\ast}_{1}M, ^{BS}\!\hat{g})$ be its unit cotangent bundle equipped with the Berger-type deformed Sasaki metric, and $C$ be a geodesic on $T^{\ast}_{1}M$. Then $\mathcal{R}(\widetilde{\vartheta^{\prime}},\widetilde{\vartheta})$ is parallel along the projected curve $\gamma=\pi\circ C$.
\end{theorem}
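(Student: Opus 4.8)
The plan is to follow the proof of Theorem~\ref{th_04} almost verbatim: I differentiate $\mathcal{R}(\widetilde{\vartheta^{\prime}},\widetilde{\vartheta})$ covariantly along the projected curve $\gamma$ and show the result vanishes. The computation is in fact lighter here than in the full cotangent bundle, because on $T^{\ast}_{1}M$ the geodesic equation~(\ref{eq_17}) reduces to $\vartheta^{\prime\prime}=-2\delta^{2}\mu\,\vartheta^{\prime}J$, and because Theorem~\ref{th_05} already supplies the constancy $\mu=const$ together with $g^{-1}(\vartheta^{\prime},\vartheta)=0$.

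First I apply the Leibniz rule to $\mathcal{R}(\widetilde{\vartheta^{\prime}},\widetilde{\vartheta})=R(\widetilde{\vartheta^{\prime}},\widetilde{\vartheta})+\delta^{2}\mu\,R(\widetilde{\vartheta},J\widetilde{\vartheta})$. Since $\mu^{\prime}=0$, the term $\delta^{2}\mu^{\prime}R(\widetilde{\vartheta},J\widetilde{\vartheta})$ disappears. I then invoke two structural facts repeatedly: local symmetry, which forces every term of the form $(\nabla_{\gamma^{\prime}}R)(\cdot,\cdot)$ to vanish, and the K\"{a}hler condition $\nabla J=0$ together with~(\ref{eq_02}), which lets $\nabla_{\gamma^{\prime}}$ commute both with the musical isomorphism $\widetilde{(\cdot)}$ and with $J$; the skew-symmetry of $R$ removes the term $R(\widetilde{\vartheta^{\prime}},\widetilde{\vartheta^{\prime}})$. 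This collapses the derivative to
\[
(\mathcal{R}(\widetilde{\vartheta^{\prime}},\widetilde{\vartheta}))^{\prime}=R(\widetilde{\vartheta^{\prime\prime}},\widetilde{\vartheta})+\delta^{2}\mu\big(R(\widetilde{\vartheta^{\prime}},J\widetilde{\vartheta})+R(\widetilde{\vartheta},J\widetilde{\vartheta^{\prime}})\big).
\]

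Finally I substitute the geodesic equation together with~(\ref{eq_08}), namely $\widetilde{\vartheta^{\prime}J}=-J\widetilde{\vartheta^{\prime}}$, which gives $\widetilde{\vartheta^{\prime\prime}}=2\delta^{2}\mu\,J\widetilde{\vartheta^{\prime}}$ and hence $R(\widetilde{\vartheta^{\prime\prime}},\widetilde{\vartheta})=2\delta^{2}\mu\,R(J\widetilde{\vartheta^{\prime}},\widetilde{\vartheta})$. Using the K\"{a}hler curvature identities~(\ref{eq_07}), in particular $R(JY,Z)=-R(Y,JZ)$, together with the skew-symmetry $R(A,B)=-R(B,A)$, I rewrite $R(J\widetilde{\vartheta^{\prime}},\widetilde{\vartheta})=-R(\widetilde{\vartheta^{\prime}},J\widetilde{\vartheta})$ and $R(\widetilde{\vartheta},J\widetilde{\vartheta^{\prime}})=R(\widetilde{\vartheta^{\prime}},J\widetilde{\vartheta})$, so the bracket equals $2R(\widetilde{\vartheta^{\prime}},J\widetilde{\vartheta})$. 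The two surviving contributions are then $-2\delta^{2}\mu\,R(\widetilde{\vartheta^{\prime}},J\widetilde{\vartheta})$ and $+2\delta^{2}\mu\,R(\widetilde{\vartheta^{\prime}},J\widetilde{\vartheta})$, which cancel, proving $(\mathcal{R}(\widetilde{\vartheta^{\prime}},\widetilde{\vartheta}))^{\prime}=0$. I expect the only delicate point to be the sign bookkeeping in these K\"{a}hler curvature manipulations and the verification that $J$ genuinely commutes past $\nabla_{\gamma^{\prime}}$ (requiring $\nabla J=0$) and past $\widetilde{(\cdot)}$ via~(\ref{eq_08}); once these are handled, the cancellation is automatic.
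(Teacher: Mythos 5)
Your proof is correct and is exactly the argument the paper intends: its own proof of Theorem \ref{th_06} is a one-line reference to the computation of Theorem \ref{th_04}, carried out with the simpler geodesic equation $\vartheta^{\prime\prime}=-2\delta^{2}\mu\,\vartheta^{\prime}J$ and the constancy of $\mu$ from Theorem \ref{th_05}, which is precisely what you did. Your sign bookkeeping with (\ref{eq_07}) and (\ref{eq_08}) checks out, so the cancellation $(\mathcal{R}(\widetilde{\vartheta^{\prime}},\widetilde{\vartheta}))^{\prime}=0$ holds as you state.
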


\begin{proof} 
Similarly, proving Theorem $\ref{th_04}$, using $\mu=g^{-1}(\vartheta^{\prime},\vartheta J)$ and Theorem $\ref{th_05}$, we get the result.
\end{proof}

\begin{theorem}\label{th_07}
Let $(M^{2m}, J, g)$ denote a standard K\"{a}hler locally symmetric manifold, $(T^{\ast}_{1}M,{}^{BS}\!\hat{g})$ be its unit tangent bundle equipped with Berger-type deformed Sasaki metric, and $C$ be a geodesic on $T^{\ast}_{1}M$, then all Frenet curvatures of the projected curve $\gamma=\pi\circ C$ are constants.
\end{theorem}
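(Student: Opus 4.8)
The plan is to build on the two facts already established for a geodesic $C=(\gamma(t),\vartheta(t))$ on $T^{\ast}_{1}M$: by Theorem~\ref{th_05} the projected curve satisfies $\gamma^{\prime\prime}=\mathcal{R}(\widetilde{\vartheta^{\prime}},\widetilde{\vartheta})\gamma^{\prime}$ with $|\gamma^{\prime}|=\sqrt{1-K}$ constant, and by Theorem~\ref{th_06} the endomorphism $\mathcal{A}:=\mathcal{R}(\widetilde{\vartheta^{\prime}},\widetilde{\vartheta})$ of $TM$ is parallel along $\gamma$. First I would record the further elementary fact that $\mathcal{A}$ is skew-symmetric with respect to $g$: since each $Z\mapsto R(X,Y)Z$ is skew-adjoint, both summands $R(\widetilde{\vartheta^{\prime}},\widetilde{\vartheta})$ and $\delta^{2}\mu\,R(\widetilde{\vartheta},J\widetilde{\vartheta})$ of $\mathcal{A}$ are skew-adjoint, hence so is $\mathcal{A}$.

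Next I would pass to a parallel orthonormal frame $\{e_{1},\dots,e_{2m}\}$ along $\gamma$. Because $\mathcal{A}$ is parallel, its matrix in this frame is a \emph{constant} matrix $A$, and because $\mathcal{A}$ is skew-adjoint, $A$ is skew-symmetric. Writing $\gamma^{\prime}=\sum_{i}c_{i}(t)e_{i}$, the equation $\nabla_{\gamma^{\prime}}\gamma^{\prime}=\mathcal{A}\gamma^{\prime}$ becomes the constant-coefficient linear system $\dot{c}=Ac$, whose solution is $c(t)=e^{tA}c(0)$. Since the frame is parallel, the iterated covariant derivative $\gamma^{(k)}:=(\nabla_{\gamma^{\prime}})^{k-1}\gamma^{\prime}$ has components $A^{k-1}c(t)=A^{k-1}e^{tA}c(0)$.

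The decisive step is then that every inner product of covariant derivatives is independent of $t$. Using that the frame is orthonormal, that $e^{tA}$ commutes with every power of $A$, and that $A$ skew-symmetric makes $e^{tA}$ orthogonal, one gets
\begin{equation*}
g(\gamma^{(i)},\gamma^{(j)})=\langle A^{i-1}e^{tA}c(0),\,A^{j-1}e^{tA}c(0)\rangle=\langle A^{i-1}c(0),\,A^{j-1}c(0)\rangle,
\end{equation*}
which does not depend on $t$. I expect this orthogonality-plus-commutation observation to be the crux of the argument; everything else is bookkeeping.

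Finally, since $|\gamma^{\prime}|$ is constant, the arc-length parameter is an affine function of $t$, so the arc-length covariant derivatives differ from the $\gamma^{(k)}$ only by constant scalar factors. The Frenet curvatures $k_{1},k_{2},\dots$ of $\gamma$ are universal functions of the Gram determinants $\Delta_{\ell}=\det\big(g(\gamma^{(i)},\gamma^{(j)})\big)_{1\le i,j\le \ell}$ arising from the Gram--Schmidt construction of the Frenet frame; as all entries $g(\gamma^{(i)},\gamma^{(j)})$ are constant, every $\Delta_{\ell}$ is constant, and hence every Frenet curvature of $\gamma=\pi\circ C$ is constant. The only point needing mild care is the degenerate case $K=1$, in which $\gamma$ reduces to a point and the statement is vacuous.
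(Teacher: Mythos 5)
Your proof is correct, and it reaches the conclusion by a genuinely different final mechanism than the paper's. The paper does not actually invoke Theorem~\ref{th_06} in its proof of Theorem~\ref{th_07}; instead it re-derives by hand the recursion $\gamma^{(p+1)}=\mathcal{R}(\widetilde{\vartheta^{\prime}},\widetilde{\vartheta})\gamma^{(p)}$ --- differentiating $\gamma^{\prime\prime\prime}=(\mathcal{R}(\widetilde{\vartheta^{\prime}},\widetilde{\vartheta})\gamma^{\prime})^{\prime}$ and cancelling the extra terms via local symmetry ($R^{\prime}=0$), the second geodesic equation $\vartheta^{\prime\prime}=-2\delta^{2}\mu\,\vartheta^{\prime}J$ and the K\"{a}hler identities (\ref{eq_07}) --- and then deduces only the norms $|\gamma^{(p)}|=const$ from skew-symmetry of the curvature operator. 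The curvatures are then extracted one at a time: passing to arc length via $|\gamma^{\prime}|=\sqrt{1-K}$, expanding $\gamma^{\prime\prime}=(1-K)k_{1}\nu_{2}$, then $\gamma^{\prime\prime\prime}=(1-K)^{3/2}k_{1}(-k_{1}\nu_{1}+k_{2}\nu_{3})$, and inducting through the Frenet formulas (\ref{F.F}). You instead take Theorem~\ref{th_06} as the key input (legitimate: it precedes Theorem~\ref{th_07} and its proof is independent, so there is no circularity), pass to a parallel orthonormal frame in which the geodesic equation becomes the constant-coefficient system $\dot{c}=Ac$ with $A$ constant and skew-symmetric, and conclude at one stroke that the entire Gram matrix $g(\gamma^{(i)},\gamma^{(j)})$ is constant, since $e^{tA}$ is orthogonal and commutes with powers of $A$. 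What your route buys: it is shorter, it exposes the underlying linear-algebra mechanism (an isometric flow $e^{tA}$ acting on the derivative components), it yields the strictly stronger conclusion that all pairwise inner products of the covariant derivatives --- not merely their norms --- are constant, and you explicitly flag the degenerate case $K=1$. What the paper's route buys: it stays entirely within the Frenet formalism and avoids the Gram-determinant characterization of the curvatures on which your last step relies. One shared caveat: both arguments implicitly assume the nondegeneracy (linear independence of the relevant covariant derivatives, nonvanishing of the lower curvatures) needed for the Frenet frame, respectively the Gram--Schmidt construction, to proceed to the order considered.
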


\begin{proof} Using the first equation of $(\ref{eq_17})$, we have $$\gamma^{\prime\prime}=\mathcal{R}(\widetilde{\vartheta^{\prime}},\widetilde{\vartheta})\gamma^{\prime}=R(\widetilde{\vartheta^{\prime}},\widetilde{\vartheta})\gamma^{\prime}+\delta^{2}\mu R(\widetilde{\vartheta},J\widetilde{\vartheta})\gamma^{\prime}.$$
Since $(g(\gamma^{\prime}, \gamma^{\prime}))^{\prime}=2g(\gamma^{\prime\prime}, \gamma^{\prime})=2g(\mathcal{R}(\widetilde{\vartheta^{\prime}},\widetilde{\vartheta})\gamma^{\prime}, \gamma^{\prime})=0$, hence $|\gamma^{\prime}|= const$.
\begin{eqnarray*}
\gamma^{\prime\prime\prime}&=&(\mathcal{R}(\widetilde{\vartheta^{\prime}},\widetilde{\vartheta})\gamma^{\prime})^{\prime}\\
&=&(R(\widetilde{\vartheta^{\prime}},\widetilde{\vartheta})\gamma^{\prime})^{\prime}+\delta^{2}\mu (R(\widetilde{\vartheta},J\widetilde{\vartheta})\gamma^{\prime})^{\prime}\\
&=&R^{\prime}(\widetilde{\vartheta^{\prime}},\widetilde{\vartheta})\gamma^{\prime}+R(\widetilde{\vartheta^{\prime\prime}},\widetilde{\vartheta})\gamma^{\prime}+R(\widetilde{\vartheta^{\prime}},\widetilde{\vartheta^{\prime}})\gamma^{\prime}+R(\widetilde{\vartheta^{\prime}},\widetilde{\vartheta})\gamma^{\prime\prime}\\
&&+\delta^{2}\mu(R^{\prime}(\widetilde{\vartheta},J\widetilde{\vartheta})\gamma^{\prime}+R(\widetilde{\vartheta^{\prime}},J\widetilde{\vartheta})\gamma^{\prime}+R(\widetilde{\vartheta},J\widetilde{\vartheta^{\prime}})\gamma^{\prime}+R(\widetilde{\vartheta},J\widetilde{\vartheta})\gamma^{\prime\prime})\\
&=&R(\widetilde{\vartheta^{\prime\prime}},\widetilde{\vartheta})\gamma^{\prime}+R(\widetilde{\vartheta^{\prime}},\widetilde{\vartheta})\gamma^{\prime\prime}+\delta^{2}\mu(R(\widetilde{\vartheta^{\prime}},J\widetilde{\vartheta})\gamma^{\prime}+R(\widetilde{\vartheta},J\widetilde{\vartheta^{\prime}})\gamma^{\prime}+R(\widetilde{\vartheta},J\widetilde{\vartheta})\gamma^{\prime\prime})\\
&=&R(\widetilde{\vartheta^{\prime\prime}},\widetilde{\vartheta})\gamma^{\prime}-2\delta^{2}\mu\,R(J\widetilde{\vartheta^{\prime}},\widetilde{\vartheta})\gamma^{\prime}+\mathcal{R}(\widetilde{\vartheta^{\prime}},\widetilde{\vartheta})\gamma^{\prime\prime}\\
&=&R(\widetilde{\vartheta^{\prime\prime}},\widetilde{\vartheta})\gamma^{\prime}+2\delta^{2}\mu\,R(\widetilde{\vartheta^{\prime}J},\widetilde{\vartheta})\gamma^{\prime}+\mathcal{R}(\widetilde{\vartheta^{\prime}},\widetilde{\vartheta})\gamma^{\prime\prime}\\
&=&R(\widetilde{\vartheta^{\prime\prime}},\widetilde{\vartheta})\gamma^{\prime}-R(\widetilde{\vartheta^{\prime\prime}},\widetilde{\vartheta})\gamma^{\prime}+\mathcal{R}(\widetilde{\vartheta^{\prime}},\widetilde{\vartheta})\gamma^{\prime\prime}\\
&=&\mathcal{R}(\widetilde{\vartheta^{\prime}},\widetilde{\vartheta})\gamma^{\prime\prime}.
\end{eqnarray*}
Since $(g(\gamma^{\prime\prime}, \gamma^{\prime\prime}))^{\prime}=2g(x^{\prime\prime\prime}, \gamma^{\prime\prime})=2g(\mathcal{R}(\widetilde{\vartheta^{\prime}},\widetilde{\vartheta})\gamma^{\prime\prime}, \gamma^{\prime\prime})=0$, hence $|\gamma^{\prime\prime}|= const$.

Continuing the process by recurrence, we get 
\begin{equation*}
\gamma^{(p+1)}=\mathcal{R}(\widetilde{\vartheta^{\prime}},\widetilde{\vartheta})\gamma^{(p)},\quad p\geq1
\end{equation*}
and
\begin{equation*}
(g(\gamma^{(p)}, \gamma^{(p)}))^{\prime}=2g(\gamma^{(p+1)},\gamma^{(p)})=2g(\mathcal{R}(\widetilde{\vartheta^{\prime}},\widetilde{\vartheta})\gamma^{(p)},\gamma^{(p)})=0.
\end{equation*}
Thus, we get 
\begin{equation}\label{eq_19}
|\gamma^{(p)}|=const,\quad p\geq 1.  
\end{equation}
Denote by $s$ an arc length parameter on $\gamma $, i.e. $(|\gamma^{\prime}_{s}|=1)$. Then $\gamma^{\prime}=
\gamma^{\prime}_{s}\frac{d\,s}{dt}$, and using $(\ref{eq_18})$, we get
\begin{equation}\label{eq_20}
\frac{d\,s}{dt}=\sqrt{1-K}=const.	
\end{equation}
Let $\nu_{1} = \gamma^{\prime}_{s}$ be the first vector in the Frenet frame $\nu_{1}, \ldots, \nu_{2m-1}$ along $\gamma$ and let $k_{1}, \ldots, k_{2m-1}$ the Frenet curvatures of $\gamma$. Then the Frenet formulas verify
\begin{equation}\label{F.F}
\left\{
\begin{array}{lll}
(\nu_{1})^{\prime}_{s}&=&k_{1}\nu_{2}\\
(\nu_{i})^{\prime}_{s}&=&-k_{i-1}\nu_{i-1}+k_{i}\nu_{i+1},\quad 2\leq i\leq 2m-2\\
(\nu_{2m-1})^{\prime}_{s}&=&-k_{2m-2}\nu_{2m-2}
\end{array}
\right.
\end{equation}
Using $(\ref{eq_20})$ and the Frenet formulas $(\ref{F.F})$, we obtain
$$\gamma^{\prime}=\gamma^{\prime}_{s}\frac{d\,s}{dt}=\sqrt{1-K}\,\nu_{1}.$$
$$\gamma^{\prime\prime}=\sqrt{1-K}(\nu_{1})^{\prime}_{t}=\sqrt{1-K}(\nu_{1})^{\prime}_{s}\frac{d\,s}{dt}=(1-K)k_{1}\nu_{2}.$$
Now $(\ref{eq_19})$ implies $k_{1} = const$. Next, in a similar way, we have
$$\gamma^{\prime\prime\prime}=(1-K)k_{1}(\nu_{2})^{\prime}_{t}=(1-K)k_{1}(\nu_{2})^{\prime}_{s}\frac{d\,s}{dt}=(1-K)\sqrt{1-K}k_{1}(-k_{1}\nu_{1}+k_{2}\nu_{3}).$$
and again $(\ref{eq_19})$ implies $k_{2}=const$. By continuing the process, we finish the proof.
\end{proof}

\begin{lemma}\label{lem_07}
Let $(M^{2m}, J, g)$ be a standard K\"{a}hler manifold, $(T^{\ast}_{1}M,{}^{BS}\!\hat{g})$ its unit cotangent bundle equipped with Berger-type deformed Sasaki metric and  $C=(\gamma(t),\vartheta(t))$ be a curve on $T^{\ast}_{1}M$, we put $\xi=\vartheta J$, then we have
\begin{enumerate}
  \item  $\Gamma=(\gamma(t),\xi(t))$ is a curve on $T^{\ast}_{1}M$. 
\item  $\Gamma$ is a geodesic on $T^{\ast}_{1}M$ if and only if $C$ is a geodesic on $T^{\ast}_{1}M$.
\end{enumerate}
\end{lemma}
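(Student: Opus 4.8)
The plan is to reduce everything to the geodesic characterization (\ref{eq_17}) of Theorem \ref{th_05} and to transport it along the correspondence $\vartheta \mapsto \xi = \vartheta J$ using the Kähler identities. For statement~(1), I observe that $\Gamma$ and $C$ share the base curve $\gamma$, so only the fibre condition needs checking: since $C$ lies on $T^{\ast}_{1}M$ we have $g^{-1}(\vartheta,\vartheta)=1$, and the Hermitian identity (\ref{Herm-m1}) gives $g^{-1}(\xi,\xi)=g^{-1}(\vartheta J,\vartheta J)=g^{-1}(\vartheta,\vartheta)=1$. Hence $\Gamma=(\gamma,\xi)$ is again a curve on $T^{\ast}_{1}M$.

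For statement~(2), the first step is to rewrite the covariant derivatives of $\xi$ in terms of those of $\vartheta$. Because $M$ is standard K\"{a}hler, formula (\ref{eq_06}) gives $\xi'=\nabla_{\gamma'}(\vartheta J)=\vartheta' J$ and, iterating, $\xi''=\vartheta'' J$. I then compute the scalar $\mu_{\xi}:=g^{-1}(\xi',\xi J)$ attached to $\Gamma$. Using $J^{2}=-I$ (so that $\vartheta JJ=-\vartheta$) and (\ref{Herm-m1}), one finds $\mu_{\xi}=-g^{-1}(\vartheta' J,\vartheta)=g^{-1}(\vartheta',\vartheta J)=\mu$. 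This identity $\mu_{\xi}=\mu$ is the linchpin, and I would establish it before touching the curvature term.

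Next I show the curvature operator is unchanged, $\mathcal{R}(\widetilde{\xi'},\widetilde{\xi})=\mathcal{R}(\widetilde{\vartheta'},\widetilde{\vartheta})$. By (\ref{eq_08}) we have $\widetilde{\xi}=-J\widetilde{\vartheta}$ and $\widetilde{\xi'}=-J\widetilde{\vartheta'}$, so $R(\widetilde{\xi'},\widetilde{\xi})=R(J\widetilde{\vartheta'},J\widetilde{\vartheta})=R(\widetilde{\vartheta'},\widetilde{\vartheta})$ by the second symmetry in (\ref{eq_07}). For the correction term, $J\widetilde{\xi}=\widetilde{\vartheta}$, so $R(\widetilde{\xi},J\widetilde{\xi})=R(-J\widetilde{\vartheta},\widetilde{\vartheta})=R(\widetilde{\vartheta},J\widetilde{\vartheta})$ after using the antisymmetry of $R$ in its first pair of arguments (equivalently the third symmetry in (\ref{eq_07})). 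Together with $\mu_{\xi}=\mu$ this yields the claimed equality, so the first line of (\ref{eq_17}) for $\Gamma$ coincides with that for $C$. Finally, the second line for $\Gamma$, namely $\xi''=-2\delta^{2}\mu_{\xi}\,\xi' J$, becomes $\vartheta'' J=2\delta^{2}\mu\,\vartheta'$ after substituting $\xi''=\vartheta'' J$, $\mu_{\xi}=\mu$ and $\xi' J=-\vartheta'$; this is precisely the second line for $C$, i.e. $\vartheta''=-2\delta^{2}\mu\,\vartheta' J$, multiplied on the right by $J$. As right multiplication by $J$ is invertible, the two systems are equivalent, proving the stated equivalence.

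The computation is essentially mechanical, and the only genuinely delicate point is the sign bookkeeping. The equivalence rests on a chain of exact cancellations produced by $J^{2}=-I$ together with the Hermitian identity (\ref{Herm-m1}) and the K\"{a}hler curvature symmetries (\ref{eq_07}); a single misplaced sign would break the invariance of either $\mu$ or $\mathcal{R}$ and destroy the correspondence. I would therefore carry all signs explicitly through the three reductions above---the norm computation, the evaluation of $\mu_{\xi}$, and the transformation of $\mathcal{R}$---rather than suppressing them.
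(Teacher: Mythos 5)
Your proof is correct and takes essentially the same route as the paper's: both reduce to the geodesic characterization (\ref{eq_17}) and transport it under $\vartheta \mapsto \xi = \vartheta J$, using $\xi^{\prime}=\vartheta^{\prime}J$, $\xi^{\prime\prime}=\vartheta^{\prime\prime}J$, the invariance $\mathcal{R}(\widetilde{\xi^{\prime}},\widetilde{\xi})=\mathcal{R}(\widetilde{\vartheta^{\prime}},\widetilde{\vartheta})$, and the invertibility of $J$. The only difference is one of detail: you explicitly verify $\mu_{\xi}=\mu$ and the curvature invariance from (\ref{Herm-m1}), (\ref{eq_08}) and (\ref{eq_07}), identities the paper's proof asserts without computation.
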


\begin{proof}	
\begin{enumerate}
\item Since we have $\xi(t)=\vartheta J(t)$, then $g^{-1}(\xi,\xi)=g^{-1}(\vartheta J,\vartheta J)=g^{-1}(\vartheta,\vartheta)$.  Since $C =(\gamma(t),\vartheta(t))\in T^{\ast}_{1}M$ we get  $g(\vartheta,\vartheta)=1$.  Hence, $g(\xi,\xi)=1$, which means that $\Gamma =(\gamma(t),\xi(t))\in T^{\ast}_{1}M$. 

\item  In a similar way proof of $(\ref{eq_17})$, we have
\begin{eqnarray*}
\widehat{\nabla}_{\Gamma^{\prime}}\Gamma^{\prime} &=& {}^{H}\!\big(\gamma^{\prime\prime}-\mathcal{R}(\widetilde{\xi^{\prime}},\widetilde{\xi})\gamma^{\prime}\big)+{}^{T}\!\big(\xi^{\prime\prime}+2\delta^{2}\mu\,\xi^{\prime}J\big),
\end{eqnarray*}
since  $\xi^{\prime}=\vartheta^{\prime} J$, $\xi^{\prime\prime}=\vartheta^{\prime\prime}J$ and $\mathcal{R}(\widetilde{\xi^{\prime}},\widetilde{\xi})=\mathcal{R}(\widetilde{\vartheta^{\prime}},\widetilde{\vartheta})$, we have
\begin{eqnarray*}
\widehat{\nabla}_{\Gamma^{\prime}}\Gamma^{\prime} &=&{}^{H}\!\big(\gamma^{\prime\prime}-\mathcal{R}(\widetilde{\vartheta^{\prime}},\widetilde{\vartheta})\gamma^{\prime}\big)+{}^{T}\!\big((\vartheta^{\prime\prime}+2\delta^{2}\mu\,\vartheta^{\prime}J)J\big).
\end{eqnarray*}
\begin{equation*}
\widehat{\nabla}_{\Gamma^{\prime}}\Gamma^{\prime}=0 \Leftrightarrow	\left\{
\begin{array}{lll}
\gamma^{\prime\prime}-\mathcal{R}(\widetilde{\vartheta^{\prime}},\widetilde{\vartheta})\gamma^{\prime}=0\\
\vartheta^{\prime\prime}+2\delta^{2}\mu\,\vartheta^{\prime}J=0
\end{array}
\right.\Leftrightarrow \left\{
\begin{array}{lll}
\gamma^{\prime\prime}=\mathcal{R}(\widetilde{\vartheta^{\prime}},\widetilde{\vartheta})\gamma^{\prime}\\
\vartheta^{\prime\prime}=-2\delta^{2}\mu\,\vartheta^{\prime}J
\end{array}
\right.\Leftrightarrow\widehat{\nabla}_{C^{\prime}}C^{\prime}=0.
\qedhere
\end{equation*}
\end{enumerate}
\end{proof}

From Theorem \ref{th_07} and Lemma \ref{lem_07}, we have the following theorem: 

\begin{theorem}\label{th_08}
Let $(M^{2m}, J, g)$ denote a standard K\"{a}hler locally symmetric manifold, $(T^{\ast}_{1}M,{}^{BS}\!\hat{g})$ be its unit cotangent bundle equipped with Berger-type deformed Sasaki metric, and $C=(\gamma(t),\vartheta(t))$ be a geodesic on $T^{\ast}_{1}M$, we put $\xi=\vartheta J$, then all Frenet curvatures  of the projected curve $\gamma=\pi\circ \Gamma$ are constants, where $\Gamma=(\gamma(t), \xi(t))$.
\end{theorem}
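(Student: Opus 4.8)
The plan is to obtain Theorem~\ref{th_08} as an immediate consequence of Lemma~\ref{lem_07} together with Theorem~\ref{th_07}, with essentially no new computation required beyond tracking the hypotheses carefully. The essential point is that the deformed covector $\xi=\vartheta J$ produces a new curve $\Gamma=(\gamma(t),\xi(t))$ that remains a geodesic on $T^{\ast}_{1}M$ and shares its base projection with $C$, so that Theorem~\ref{th_07} can simply be invoked for $\Gamma$ in place of $C$.

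First I would appeal to Lemma~\ref{lem_07}(1) to record that $\Gamma$ genuinely lies on the unit cotangent bundle: since $g^{-1}(\xi,\xi)=g^{-1}(\vartheta J,\vartheta J)=g^{-1}(\vartheta,\vartheta)=1$ by the Hermitian identity $(\ref{Herm-m1})$ and the fact that $C\in T^{\ast}_{1}M$, we have $\Gamma=(\gamma(t),\xi(t))\in T^{\ast}_{1}M$. Next, since $C$ is assumed to be a geodesic on $T^{\ast}_{1}M$, Lemma~\ref{lem_07}(2) yields the equivalence showing that $\Gamma$ is likewise a geodesic on $T^{\ast}_{1}M$.

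I would then observe that the projection of $\Gamma$ to $M$ is precisely $\gamma$, that is $\pi\circ\Gamma=\gamma$, because $\Gamma$ and $C$ carry the same first component $\gamma(t)$. Hence $\gamma$ is simultaneously the projected curve of the geodesic $C$ and of the geodesic $\Gamma$.

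Finally, because $(M^{2m},J,g)$ is standard K\"{a}hler and locally symmetric and $\Gamma$ is a geodesic on $T^{\ast}_{1}M$, Theorem~\ref{th_07} applies verbatim to $\Gamma$ and asserts that all Frenet curvatures of its projected curve $\pi\circ\Gamma=\gamma$ are constant, which is exactly the claim of Theorem~\ref{th_08}. The only delicacy here is bookkeeping: one must apply Theorem~\ref{th_07} to $\Gamma$ rather than directly to $C$, and verify that the projection of $\Gamma$ coincides with $\gamma$. There is no genuine analytic obstacle, since the substantive recursion on the Frenet frame $\nu_{1},\dots,\nu_{2m-1}$ and the constancy argument $|\gamma^{(p)}|=const$ have already been carried out in the proof of Theorem~\ref{th_07}.
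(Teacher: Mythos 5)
Your proposal is correct and follows exactly the paper's own route: the paper states Theorem~\ref{th_08} as an immediate consequence of Lemma~\ref{lem_07} (parts (1) and (2), giving that $\Gamma=(\gamma(t),\xi(t))$ lies in $T^{\ast}_{1}M$ and is a geodesic whenever $C$ is) combined with Theorem~\ref{th_07} applied to $\Gamma$, whose projection is again $\gamma$. Your additional bookkeeping --- verifying $g^{-1}(\xi,\xi)=1$ via $(\ref{Herm-m1})$ and noting $\pi\circ\Gamma=\gamma$ --- is exactly the implicit content of the paper's one-line deduction, so nothing is missing.
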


\subsection*{Acknowledgments}
The author expresses his gratitude to the anonymous referee for his valuable comments and suggestions towards improving the paper. The author would also like to thank Professor  Ahmed Mohammed Cherif, University Mustapha Stambouli of Mascara Algeria, for her helpful suggestions and valuable comments.
This research was supported by the National Algerian P.R.F.U. project.

{\small

}

\EditInfo{March 3, 2023}{May 9, 2023}{Ilka Agricola}

\end{document}